\apptocmd{\sloppy}{\hbadness 10000\relax}{}{}
\apptocmd{\sloppy}{\vbadness 10000\relax}{}{}
\newtheorem{theorem}{Theorem}[section]
\newtheorem{lemma}[theorem]{Lemma}
\newtheorem{corollary}[theorem]{Corollary}
\theoremstyle{definition}
\newtheorem{definition}[theorem]{Definition}
\theoremstyle{remark}
\newtheorem{remark}[theorem]{Remark}
\let\inf\relax \DeclareMathOperator*\inf{\vphantom{p}inf}
\newcommand{\side}{\mathop\mathrm{side}\nolimits}
\newcommand{\vol}{\mathop\mathrm{vol}\nolimits}
\newcommand{\Haus}{\mathcal{H}}
\newcommand{\Net}{\mathcal{M}}
\newcommand{\RR}{\mathbb{R}}
\newcommand{\ZZ}{\mathbb{Z}}
\numberwithin{equation}{section}
\numberwithin{figure}{section}
\numberwithin{table}{section}
\newcommand{\interior}{\mathop\mathrm{int}\nolimits}
\newcommand{\diam}{\mathop\mathrm{diam}\nolimits}
\newcommand{\Child}{\mathop\mathsf{Child}\nolimits}
\newcommand{\gap}{\mathop\mathrm{gap}\nolimits}
\title[Lower bounds on Bourgain's constant]{Lower bounds on Bourgain's constant\\ for harmonic measure}
\date{November 10, 2022}
\author{Matthew Badger}
\author{Alyssa Genschaw}
\thanks{M.~Badger was partially supported by NSF DMS grant 2154047.}
\subjclass[2020]{Primary 31B15, Secondary 28A75, 31B25, 60J65.}
\keywords{harmonic measure, Hausdorff dimension, Bourgain's estimate, Frostman's lemma, net measures}
\address{Department of Mathematics\\ University of Connecticut\\ Storrs, CT 06269-1009}
\email{matthew.badger@uconn.edu}
\address{Mathematics Department\\ Milwaukee School of Engineering\\ Milwaukee, WI 53202}
\email{genschaw@msoe.edu}
\begin{document}

\begin{abstract} For every $n\geq 2$, Bourgain's constant $b_n$ is the largest number such that the (upper) Hausdorff dimension of harmonic measure is at most $n-b_n$ for every domain in $\RR^n$ on which harmonic measure is defined. Jones and Wolff (1988) proved that $b_2=1$. When $n\geq 3$, Bourgain (1987) proved that $b_n>0$ and Wolff (1995) produced examples showing $b_n<1$. Refining Bourgain's original outline, we prove that \[ b_n\geq c\,n^{-2n(n-1)}/\ln(n)\] for all $n\geq 3$, where $c>0$ is a constant that is independent of $n$. We further estimate $b_3\geq 1\times 10^{-15}$ and $b_4\geq 2\times 10^{-26}$.\end{abstract}

\maketitle

\section{Introduction}

An outstanding problem on the boundary behavior of harmonic functions in space is to identify the maximal \emph{minimal dimension} of a subset of the boundary of a domain through which Brownian motion first exits the domain almost surely \cite{bishop-review}. Adopting the parlance of geometric measure theory \cite{Falconer,measure-dim}, one would like to identify the largest possible \emph{(upper) Hausdorff dimension} $\overline{\dim}_H\,\omega$ of harmonic measure $\omega=\omega_\Omega$ across all connected, open sets $\Omega\subsetneq\RR^n$. For planar domains ($n=2$), the Hausdorff dimension of harmonic measure is at most $1$ \cite{Jones-Wolff} (also see \cite{oksendal81,kw-snowflake,carleson-cantor,Makarov,Wolff-sigma}) and this is sharp (e.g.~when $\Omega$ is a disk). For space domains ($n\geq 3$), the Hausdorff dimension of harmonic measure is at most $n-b_n$ for some undetermined value $0<b_n<1$ \cite{Bourgain,Wolff} (also see \cite{LVV}), which we call \emph{Bourgain's constant}. Determining the value of Bourgain's constant is related to understanding certain physical phenomenon (passivation, fouling, poisoning) \cite{filoche-passivation}. While a conjectural best value $1/(n-1)$ for $b_n$ has been proposed \cite{Bishop-questions}, there appears to have been little progress in affirming or disproving this conjecture to date. A numerical experiment \cite{grebenkov-numerical-experiment} suggests that $b_3\leq 0.995$, but this bound has not been mathematically verified.

Very recently, the authors proved an analogue of Bourgain's theorem in the setting of the heat equation \cite{dim-caloric}. In particular, the Hausdorff dimension of caloric measure on any domain in $\RR^n\times\RR$ (space $\times$ time) with the parabolic distance is at most $n+2-\beta_n$ for some $\beta_n>0$; moreover, $\beta_n\leq b_n$ for all $n$. (Thus, we indirectly obtain $b_n>0$.) In the process of establishing this extension, the authors initially had some difficulty implementing the demonstration of $b_3>0$ from \cite{Bourgain} in general dimensions.\footnote{In particular, the ad hoc choice of several parameters in \cite[Lemmas 1 and 2]{Bourgain} is unexplained and hides dimension dependence.} One goal of this paper is to record a complete and direct proof that $b_n>0$ when $n\geq 3$. Further, by refining Bourgain's original outline, we provide quantitative lower bounds on $b_n$ for arbitrary $n\geq 3$ and the first explicit numerical lower bounds on $b_n$ in dimensions $n=3,4$.

\begin{theorem} \label{main} There exists $c>0$ such that $b_n\geq c\,n^{-2n(n-1)}/\ln(n)$ for all $n\geq 3$.
\end{theorem}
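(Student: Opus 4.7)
The plan is to refine Bourgain's dyadic/capacity scheme carefully enough to extract the claimed explicit dimensional rate. Fix a connected open set $\Omega\subsetneq\RR^n$, a pole $x_0\in\Omega$, and write $\omega$ for the corresponding harmonic measure. Proving $\overline{\dim}_H\,\omega\leq n-b_n$ reduces to producing $s_n\geq cn^{-2n(n-1)}/\ln n$ such that $\omega(E)=0$ whenever $\Haus^{n-s_n}(E)=0$, or equivalently that the net $(n-s_n)$-measure $\Net^{n-s_n}$ dominates $\omega$ on dyadic boundary cubes down to small scales.

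First I would record the capacity form of Bourgain's estimate: there is a dimensional $a_n>0$ such that whenever a dyadic cube $Q$ meets $\partial\Omega$ and $\RR^n\setminus\Omega$ contains a ball $B(y,\lambda\side(Q))\subset Q$, the harmonic measure of a fixed dilate $Q^*$ enjoys
$$\omega(Q^*)\geq a_n\lambda^{n-2}\omega(Q),$$
via a Wiener-type capacity lower bound combined with the maximum principle. The companion geometric fact is that if no such hole of radius $\lambda\side(Q)$ lives in $Q\setminus\Omega$, then $Q\cap\partial\Omega$ is $\lambda$-porous and can be covered by at most $N_n(\lambda)\lesssim\lambda^{-(n-1)}$ sub-cubes of sidelength $O(\lambda\side(Q))$, porosity cutting the covering dimension from $n$ down to $n-1$ at each scale.

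I would then run a stopping-time decomposition on the dyadic descendants of a base cube $Q_0$ meeting $\partial\Omega$. Call a descendant \emph{good} if a hole of radius $\lambda\side$ is available in its exterior, and \emph{bad} otherwise. On any bad $Q$, splitting into its at most $N_n(\lambda)$ sub-cubes of sidelength $\lambda\side(Q)$ bleeds a fraction $\eta_n\sim a_n\lambda^{n-2}$ of $\omega(Q)$ off onto neighboring good sub-regions via the capacity estimate. After $k$ generations the bad cubes have sidelength $\lambda^k\side(Q_0)$, number at most $N_n(\lambda)^k$, and carry total $\omega$-mass $\leq(1-\eta_n)^k\omega(Q_0)$. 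A net-measure Frostman comparison then delivers
$$b_n\gtrsim\frac{-\log(1-\eta_n)}{\log(1/\lambda)}\sim\frac{a_n\lambda^{n-2}}{\log(1/\lambda)},$$
provided $\lambda$ is chosen so that the branching $N_n(\lambda)^k$ and the scale $\lambda^{k(n-s_n)}$ balance, i.e.\ so that $s_n$ absorbs the codimension-one loss in the covering count.

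The principal obstacle is quantitative. Tracking $a_n$ through the Green's function normalization and the iterated Harnack chains needed to transport harmonic measure from the hole up to $Q^*$ produces substantial dimensional losses, and combining these with the codimension-one branching $N_n(\lambda)$ across the $n-1$ boundary directions generates the exponent $2n(n-1)$ after optimization in $\lambda$. Getting the Harnack constants right is delicate---this is exactly where the error in Bourgain's original outline arose (cf.\ \cite[Remark 5.4]{dim-caloric})---so the most careful bookkeeping sits here. The explicit estimates $b_3\geq 1\times 10^{-15}$ and $b_4\geq 2\times 10^{-26}$ come from inserting the small-$n$ numerical values of $a_n$ and $N_n(\lambda)$ into the optimization at the end.
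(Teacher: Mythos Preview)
Your dichotomy is the wrong one, and the geometric half of it is false. You split descendants into ``good'' (a ball $B(y,\lambda\side Q)\subset Q\setminus\Omega$ exists) and ``bad'' (no such ball), and then assert that in the bad case $Q\cap\partial\Omega$ is $\lambda$-porous and can be covered by $\lesssim\lambda^{-(n-1)}$ subcubes of side $\lambda\side Q$. That implication does not hold: take $\Omega$ to be the complement of a totally disconnected set of positive Lebesgue measure (or any closed set with no interior but full $n$-content). Then no complementary ball exists at any scale, yet $\partial\Omega\cap Q$ has full $n$-dimensional content and needs all $\lambda^{-n}$ subcubes to cover. The count $\lambda^{-(n-1)}$ is what you get for a hyperplane, not for an arbitrary set lacking complementary balls. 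Since Bourgain's theorem must apply to domains whose complement has empty interior, a ball-based alternative cannot drive the argument.

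The paper's alternative is content-based, not ball-based. At each admissible pair $(Q,Q_*)$ one compares $\Net^{n-\rho}_\infty(\Omega^c\cap Q_*)$ to $(\side Q_*)^{n-\rho}$: either the content is genuinely small (which feeds directly into the net-measure covering in Lemma~\ref{dim-lemma}), or it is large, and then Bourgain's estimate \eqref{alpha-bourgain} forces $\omega^Z_{(\interior Q)\setminus\Omega^c}(\Omega^c\cap\interior Q)\geq\eta$ for every pole $Z\in\overline{Q_*}\cap\Omega$. In the second case there is no porosity and no covering saving; instead one iterates through $k=hm^{d-1}$ nested annular surfaces (Lemmas~\ref{c-nested}--\ref{c-lower-bound}) to show the harmonic measure of the inner core $P_k$ satisfies $\omega(P_k)\leq\eta^{-1}(1-\eta)^k\omega(P)$, and then a Cauchy--Schwarz splitting over children gives the decay $\sum_{R\in\Child(P)}\omega(R)^{1/2}(\vol R)^{1/2}\leq m^{-\lambda}\omega(P)^{1/2}(\vol P)^{1/2}$. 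The two alternatives are then merged by Lemma~\ref{dim-lemma}, yielding $b_n\geq\lambda\rho/(\lambda+\rho)$. The density parameter $d$ (number of separating surfaces per child-annulus) is essential and has no counterpart in your sketch; as the paper notes, Bourgain's original outline with $d=1$ is incomplete, and the correction in \cite{dim-caloric} is precisely the introduction of $d>1$. The exponent $n^{-2n(n-1)}$ in the end arises from choosing $m=n$, $d=2n-3$ and reading off $\rho\sim m^{-(d+1)n}/\ln m$ from Lemma~\ref{prelemma-1}, not from Harnack-chain bookkeeping or optimizing a porosity scale $\lambda$.
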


\begin{theorem}[15 decimals, 26 decimals]\label{lowdim} We have $b_3\geq 1\times 10^{-15}$ and $b_4\geq 2\times 10^{-26}$. That is to say, the Hausdorff dimension of harmonic measure in $\RR^3$ is at most $$2.99999\,99999\,99999.$$ The Hausdorff dimension of harmonic measure in $\RR^4$ is at most $$3.99999\,99999\,99999\,99999\,999998.$$\end{theorem}

Bourgain's underlying idea to prove estimates of this type is that if the boundary of a domain is spread uniformly in space throughout a cube, then the probability that a Brownian traveler exits the domain near the center of the cube should be smaller than the probability of exiting the domain near the boundary of the cube. This sets up an alternative: at every $m$-adic location, for some integer $m\gg_n 3$, either the boundary has small $(n-\rho)$-dimensional content for some $0<\rho\ll_{m,n} 1$ or the harmonic measure has positive $(n-\lambda)$-dimensional density on a subset of positive measure for some $0<\lambda\ll_{m,n} 1$. It follows that $\overline{\dim}_H\,\omega \leq n-\rho\lambda/(\rho+\lambda)$; see Lemma \ref{dim-lemma} below. This approach exhibits strong dimension dependence in the form of volume concentration near the boundary of balls or cubes in high dimensions. In several places, the arguments in \cite{Bourgain} and \cite{dim-caloric} use continuity and soft analysis to assert the existence of the parameters $m$, $\rho$, and $\lambda$ to conclude that $b_n>0$ and $\beta_n>0$. A challenge in proving Theorem \ref{main} is to carry out explicit estimates wherever feasible. For Theorem \ref{lowdim}, one need only track estimates for a small set of values of $m$, because $\rho\rightarrow 0$ rapidly as $m\rightarrow\infty$. We remark that the method of proof generates some interesting arithmetic phenomenon; see the nonlinear dependence on size of the $m$-adic grid in the lower bounds on Bourgain's constant in Table \ref{table-of-computations}.

In \S\ref{sec:gmt}, we introduce necessary elements from geometric measure theory, including a tight version of Frostman's lemma for sets in $\RR^n$ of Hausdorff dimension $s>n-1$. In \S\ref{sec:estimates}, we revisit and sharpen Bourgain's estimate on harmonic measure from below in terms of the relative size of the local $m$-adic net content. We also record some basic estimates on harmonic measure inside nested rectangles. In \S\ref{sec:parameters}, we reduce the problem of finding lower bounds on $b_n$ to showing the existence of an admissible set of parameters. Using all of these ingredients, we prove Theorem \ref{main} in \S\ref{sec:main} and Theorem \ref{lowdim} in \S\ref{sec:low}. We end with a brief note on future work in \S\ref{sec:coda}.

\emph{Acknowledgements.} The authors thank Max Engelstein for bringing the work \cite{grebenkov-numerical-experiment} by Grebenkov \emph{et al.} and \cite{filoche-passivation} by Filoche \emph{et al.}~to their attention. The authors would also like to thank Polina Perstneva and several anonymous referees for their critiques of an earlier draft of the paper.

\section{Net measures, Hausdorff dimension, and Frostman's lemma} \label{sec:gmt}

Background on Hausdorff measures, net measures, and related topics can be found in \cite{Mattila, Rogers,Bishop-Peres}.
Let $\RR^n$ be equipped with the standard Euclidean distance, i.e.~$|X-Y|=(\sum_1^n (x_i-y_i)^2)^{1/2}$ for all points $X=(x_1,\dots,x_n)$ and $Y=(y_1,\dots, y_n)$. Let $\diam E=\sup\{|X-Y|:X,Y\in E\}$ denote the \emph{diameter} of $E\subset\RR^n$. For each integer $m\geq 2$, define the system $\Delta^m(\RR^n)$ of \emph{half-open $m$-adic cubes} to be all sets $Q$ of the form $$Q=\left[\frac{j_1}{m^k},\frac{j_1+1}{m^k}\right)
\times \cdots \times \left[\frac{j_n}{m^k},\frac{j_{n}+1}{m^k}\right)\quad(k,j_1,\dots,j_{n}\in\ZZ);$$ we say that $Q$ belongs to \emph{generation $k$} of $\Delta^m(\RR^{n})$ and has \emph{side length} $\side Q=m^{-k}$ and \emph{volume} $\vol Q=m^{-kn}$. Note that $\diam Q=\sqrt{n} \side Q$ for every $Q\in\Delta^m(\RR^{n})$.

The cubes in each generation of $\Delta^m(\RR^{n})$ partition $\RR^{n}$. Every cube $Q\in\Delta^m(\RR^{n})$ of generation $k$ is contained in a unique cube $Q^\uparrow\in\Delta^m(\RR^{n})$ of generation $k-1$; we call $Q^\uparrow$ the \emph{parent} of $Q$ and call $Q$ a \emph{child} of $Q^\uparrow$. Extending this metaphor, we may refer to the ancestors ``above'' a cube and descendents ``below'' a cube. For every $Q\in\Delta^m(\RR^n)$, $\#\Child(Q)=m^{n}$ and $\vol Q=\sum_{R\in\Child(Q)}\vol R,$ where $\Child(Q)$ is the set of children.

\begin{definition}Fix $\Delta=\Delta^m(\RR^{n})$ and let $s\in[0,\infty)$. For all $\delta\in(0,\infty]$, we define $$\Net^s_\delta(E)=\inf\left\{\sum_1^\infty (\side E_i)^s:E\subseteq\bigcup_1^\infty E_i, \side E_i\leq \delta, E_i\in\Delta\right\}\quad\text{for all }E\subset\RR^{n};$$ we call $\Net^s_\infty$ the \emph{$s$-dimensional net content} on $\RR^{n}$. We define the \emph{$s$-dimensional net measure} on $\RR^{n}$ by $\Net^s(E)=\lim_{\delta\downarrow 0} \Net^s_\delta(E)$ for all $E\subset\RR^{n}$.\end{definition}

\begin{remark}\label{hausdorff-net} The net contents $\Net^s_\infty$ are outer measures on $\RR^n$ and the net measures $\Net^s$ are Borel regular outer measures on $\RR^{n}$. Unlike the Hausdorff measures $\Haus^s$ and contents $\Haus^s_\infty$, the net measures and  net contents are neither translation nor dilation invariant. Of course, for all $n\geq 1$ and $s>0$, we have $\Haus^s \lesssim_{n} \Net^s\lesssim_{n,m} \Haus^s$ and $\Haus^s_\infty \lesssim_{n} \Net^s_\infty \lesssim_{n,m} \Haus^s_\infty$. This allows us to define Hausdorff dimension using net measures in lieu of Hausdorff measures. While the net measures and net contents depend on the choice of the underlying grid $\Delta=\Delta^m(\RR^n)$, for simplicity we suppress this from the notation. We choose the letter $\Net$ to suggest $m$-adic. An elementary fact is that $\Net^s(E)=0$ if and only if $\Net^s_\infty(E)=0$. \end{remark}

\begin{definition} Let $E\subset\RR^n$. The \emph{Hausdorff dimension} of $E$ is the unique number $\dim_H E\in [0,n]$ where one witnesses a transition from $\Net^s(E)=\infty$ for all $s<\dim_H E$ to $\Net^s(E)=0$ for all $s>\dim_H E$. \end{definition}

\begin{definition} Let $\mu$ be a Borel measure on $\RR^n$. The \emph{upper Hausdorff dimension} of $\mu$ is defined to be $\overline{\dim}_H\,\mu = \inf\{\dim_H E : E\subset \RR^n \text{ is Borel, } \mu(\RR^n\setminus E)=0\}.$
\end{definition}

\begin{lemma}\label{dim-lemma} Fix $\Delta=\Delta^m(\RR^n)$. Let $\mu$ be a Radon measure on $\RR^{n}$ and let $E\subset\RR^{n}$ be a Borel set with $\mu(\RR^{n}\setminus E)=0$. If there exist constants $0<\rho<n$ and $\lambda>0$ such that for every $Q\in\Delta$ with $\side Q\leq 1$, \begin{equation}\label{dim-lemma-1} \Net^{n-\rho}_{m^{-1}\side Q}(E\cap Q)<(\side Q)^{n-\rho}\end{equation} or \begin{equation}\label{dim-lemma-2}\sum_{R\in\Child(Q)} \mu(R)^{1/2} (\vol R)^{1/2} \leq m^{-\lambda}\, \mu(Q)^{1/2}(\vol Q)^{1/2},\end{equation} then $\overline{\dim}_H\,\mu \leq n-\lambda\rho/(\lambda+\rho)$.
\end{lemma}

\begin{proof}[Proof sketch] An implicit version of the lemma with the weaker conclusion $\overline{\dim}_H\,\mu<n$ appears at the end of \cite[pp.~481--483]{Bourgain}. The authors supplied a detailed proof with the indicated upper bound, in the setting of $\RR^{n}\times\RR$, equipped with the parabolic distance. For the Euclidean case, simply repeat the proof of \cite[Theorem 2.10]{dim-caloric}, making the following superficial change. Replace each occurrence of $n+2$ (the Hausdorff dimension of parabolic $\RR^{n}\times\RR$) with $n$ (the Hausdorff dimension of $\RR^n$).
\end{proof}

\begin{lemma}[Frostman's lemma with better constant]\label{frostman} Fix $\Delta=\Delta^m(\RR^n)$. Let $K\subset\RR^{n}$ be a compact set. If $s>n-1$, then there exists a Radon measure $\mu$ on $\RR^{n}$ such that $\mu(Q)\leq (\side Q)^s$ for all $Q\in\Delta$, $\mu(\partial Q)=0$ for all $Q\in\Delta$, and $\mu(\RR^n)=\mu(K)\geq \Net^s_\infty(K)$.\end{lemma}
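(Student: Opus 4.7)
The plan is to construct $\mu$ as a weak-$*$ limit of discrete approximations produced by the classical top-down \emph{capping} procedure, and then to use the hypothesis $s>n-1$ to eliminate residual mass on the boundaries of $m$-adic cubes. For each integer $N$ larger than some fixed coarse generation $k_0$ for which $K$ is contained in finitely many generation-$k_0$ cubes, let $\mathcal{A}_N$ be the finite family of generation-$N$ cubes in $\Delta$ that meet $K$, and initialize $\mu^{(N)}_N:=\sum_{Q\in\mathcal{A}_N}m^{-Ns}\,\mathcal{U}_Q$, where $\mathcal{U}_Q$ denotes normalized Lebesgue measure on $Q$. Descend through generations $k=N-1,N-2,\ldots,k_0$: on each generation-$k$ cube $Q$, rescale $\mu^{(N)}_{k+1}|_Q$ by the factor $\min\{1,\,(\side Q)^s/\mu^{(N)}_{k+1}(Q)\}$ and call the result $\mu^{(N)}_k$. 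Set $\mu^{(N)}:=\mu^{(N)}_{k_0}$. Since rescalings preserve absolute continuity, $\mu^{(N)}(\partial Q)=0$ for every $Q\in\Delta$; and by induction on $k$, $\mu^{(N)}(Q)\leq(\side Q)^s$ for every $Q\in\Delta$ of generation in $[k_0,N]$.

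For the lower bound on total mass, call $Q$ \emph{saturated} if $\mu^{(N)}(Q)=(\side Q)^s$. Each $Q\in\mathcal{A}_N$ is initially saturated, and the only way its mass strictly decreases is for a rescaling at some ancestor to be triggered by that ancestor reaching its cap. Hence every $Q\in\mathcal{A}_N$ lies beneath a unique \emph{maximal saturated} cube (saturated, with no saturated strict ancestor); these are pairwise disjoint, cover $K\subset\bigcup\mathcal{A}_N$, and yield
\[ \mu^{(N)}(\RR^n)=\sum_{Q^*\text{ max.\ sat.}}(\side Q^*)^s\geq\Net^s_\infty(K). \]
All approximants live in a common compact set and have uniformly bounded total mass, so Banach--Alaoglu furnishes a subsequential weak-$*$ limit $\mu^{(N_j)}\to\mu$. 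Since $\mathrm{supp}\,\mu^{(N_j)}$ lies in the $\sqrt{n}\,m^{-N_j}$-neighborhood of $K$, the limit is supported in $K$ and $\mu(K)=\lim_j\mu^{(N_j)}(\RR^n)\geq\Net^s_\infty(K)$. The Portmanteau inequality on open sets gives $\mu(\interior Q)\leq\liminf_j\mu^{(N_j)}(\interior Q)\leq(\side Q)^s$ for every $Q\in\Delta$.

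The main obstacle is promoting the interior bound to the half-open-cube bound $\mu(Q)\leq(\side Q)^s$ together with $\mu(\partial Q)=0$: weak-$*$ convergence alone does not forbid $\mu$ from concentrating on an $m$-adic hyperplane, and attempting to cover a closed cube by half-open cubes is circular. The remedy exploits $s>n-1$. Fix $Q\in\Delta$ of generation $k$ and let $V_j$ denote the open $m^{-(k+j)}$-neighborhood of $\partial Q$; then $V_j$ meets at most $C_n\,m^{j(n-1)}$ generation-$(k+j)$ cubes, each satisfying the preserved bound $\mu^{(N)}(\cdot)\leq m^{-(k+j)s}$ for every $N\geq k+j$. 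Applying Portmanteau to the open set $V_j\supset\partial Q$ gives
\[ \mu(\partial Q)\leq\mu(V_j)\leq\liminf_{N\to\infty}\mu^{(N)}(V_j)\leq C_n\,m^{-ks}\,m^{j(n-1-s)}, \]
which tends to zero as $j\to\infty$ because $s>n-1$. Therefore $\mu(\partial Q)=0$ and $\mu(Q)=\mu(\interior Q)\leq(\side Q)^s$, yielding all three asserted properties of $\mu$.
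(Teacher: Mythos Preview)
Your argument is essentially the paper's: both run the standard Frostman capping construction to produce approximants $\mu^{(N)}$ with $\mu^{(N)}(Q)\le(\side Q)^s$ and $\mu^{(N)}(\partial Q)=0$, pass to a weak-$*$ limit, and then exploit $s>n-1$ via the covering of $\partial Q$ by $C_n\,m^{j(n-1)}$ cubes of side $m^{-j}\side Q$ to force $\mu(\partial Q)=0$, whence $\mu(Q)=\mu(\interior Q)\le(\side Q)^s$.

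One small point to patch. You establish $\mu^{(N)}(Q)\le(\side Q)^s$ only for cubes of generation in $[k_0,N]$, yet then assert the Portmanteau bound ``for every $Q\in\Delta$.'' For $Q$ of generation $<k_0$ this does not follow from your capping: several generation-$k_0$ cubes carrying mass can sit inside a single coarser $Q$, and their total mass need not be $\le(\side Q)^s$ when $s<n$. The paper handles exactly this by first covering $K$ with cubes $Q_1,\dots,Q_l\in\Delta$ having \emph{no common ancestors}, building a Frostman measure $\mu_i$ on each $K\cap Q_i$, and summing; since any $Q\in\Delta$ can contain at most one $Q_i$, the cube bound for $\mu=\sum\mu_i$ is immediate at every scale. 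Equivalently, in your setup you may take $k_0$ so negative that at most one generation-$k_0$ cube per orthant meets $K$ (recall that $m$-adic cubes never straddle a coordinate hyperplane through the origin), and then observe that any coarser cube contains at most one of them.
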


\noindent\emph{Remark.} When $s>n-1$, the ``constant'' in front of $\Net^s_\infty(K)$ is 1. For smaller $s$, it is possible that $\mu(\partial Q)>0$ for some $Q\in\Delta$ and the proof only gives $\mu(\RR^n)\geq 2^{-n}\Net^s_\infty(K)$. If one would like to relax the requirement that $K$ be compact to $K$ Souslin or to require $\mu$ satisfy the stronger conclusion $\mu(A)\leq (\diam A)^s$ for all sets $A\subset\RR^n$, then the constant in front of $\Net^s_\infty(K)$ becomes even smaller (see \cite[pp.~112--114]{Mattila}).

\begin{proof}Let $K\subset\RR^n$ be compact and fix $s>n-1$. We will modify a standard proof of Frostman's lemma, exploiting the fact that $s$ is greater than the Hausdorff dimension of the boundaries of $m$-adic cubes. Cover $K$ with a finite list of cubes $Q_1,\dots,Q_l\in\Delta$ with the property that each pair $Q_i$ and $Q_j$ have no common ancestors unless $i=j$. Then $\Net^s_\infty(K)=\sum_{i=1}^l \Net^s_\infty (K\cap Q_i)$. Suppose that for each $i$, we can construct a Radon measure $\mu_i$ on $\RR^{n}$ such that $\mu_i(Q)\leq (\side Q)^s$ for all $Q\in\Delta$, $\mu_i(\partial Q)=0$ for all $Q\in\Delta$, and $\mu_i(\RR^n)=\mu_i(K\cap\overline{Q}_i)\geq \Net^s_\infty(K\cap Q_i)$. (See the next paragraph.) Then $\mu=\mu_1+\cdots+\mu_l$ is our desired measure, because each $\mu_i$ vanishes on the boundaries of cubes in $\Delta$ and pairwise $Q_1,\dots, Q_l$ have no common ancestors.

Fix a cube $Q_0\in\Delta$ and assign $E:=K\cap Q_0\subset Q_0$. Following the proof of \cite[Theorem 8.8]{Mattila}, using $m$-adic cubes instead of dyadic cubes, one may produce a sequence of Radon measure $(\nu_k)_{k=1}^\infty$ such that \begin{enumerate}
  \item the support of $\nu_k$ belongs to the closure of $\bigcup\{Q\in \Delta: \side Q =  m^{-k},\; Q\cap E\neq \emptyset\}$;
  \item $\nu_k(\partial Q)=0$ for all $Q\in\Delta$;
  \item $\nu_k(Q)\leq (\side Q)^s$ for all $Q\in\Delta$ with $\side Q\geq m^{-k}$; and,
  \item for each $X\in E$, there is $Q^X\in \Delta$ with $\side Q^X\geq m^{-k}$ and $\nu_k(Q^X)=(\side Q^X)^s$.
\end{enumerate}
By (i), (ii), and (iii), $\nu_k(\RR^n)=\nu_k(\overline{Q_0})=\nu_k(Q_0)\leq (\side Q_0)^s<\infty$ for all $k$ that are large enough so that $\side Q_0 \geq m^{-k}$. By weak compactness of Radon measures, there exists a subsequence $(\nu_{k_j})_{j=1}^\infty$ and a Radon measure $\nu$ such that $\nu_{k_j}$ converges weakly to $\nu$ as $j\rightarrow\infty$ in the sense of Radon measures. By (i), the support of $\nu$ is contained in $K\cap \overline{Q_0}$. By (iv), for each $k$, there exist (maximal) disjoint cubes $Q^{X_1},\dots, Q^{X_p}$ such that $E\subset Q^{X_1}\cup\dots\cup Q^{X_p}$ and $\nu_k(\RR^n)\geq \sum_{i=1}^p \nu_k(Q^{X_i}) = \sum_{i=1}^p (\side Q^{X_i})^s \geq \Net^s_\infty(E)$. Thus, $$\nu(\RR^n)=\nu(\overline{Q_0}) \geq \limsup_{j\rightarrow\infty} \nu_{k_j}(\overline{Q_0})=\limsup_{j\rightarrow\infty} \nu_{k_j}(\RR^n)\geq \Net^s_\infty(E).$$ Next, let $Q\in\Delta$. For all $i\geq 1$, a sufficiently small open neighborhood  of $\partial Q$ can be covered by $C(n) m^{i(n-1)}$ cubes $R\in\Delta$ with $\side R = m^{-i}\side Q$. By (iii), it follows that $$\nu(\partial Q) \leq \liminf_{j\rightarrow\infty} \sum_R \nu_{k_j}(R) \leq C(n) m^{i(n-1-s)}(\side Q)^s\quad\text{for all $i\geq 1$}.$$ Hence $\nu(\partial Q)=0$, since $s>n-1$. Consequently, $\nu(Q)=\lim_{j\rightarrow\infty} \nu_{k_j}(Q) \leq (\side Q)^s$. \end{proof}

\section{Estimates for harmonic measure} \label{sec:estimates}

Harmonic measure is perhaps best viewed through several complementary perspectives,  including geometric function theory see \cite{GM}, potential theory \cite{helms}, and stochastic processes \cite{brownian}. To ease notation, we adopt the following convention. On a bounded domain $\Omega\subset\RR^n$, we let $\partial\Omega$ denote the topological boundary in the Euclidean topology. On an unbounded domain $\Omega\subset\RR^n$, we let $\partial\Omega$ denote the topological boundary in a one-point compactification of $\Omega$ so that $\partial\Omega$ includes the point at infinity. This ensures that for any domain $\Omega\subsetneq\RR^n$, $n\geq 3$, harmonic measure $\omega^X_\Omega$ with pole at $X\in\Omega$ exists and is a Borel probability measure with support in $\partial\Omega$. By Harnack's inequality, $\omega^X_\Omega$ and $\omega^Y_\Omega$ are mutually absolutely continuous for all $X,Y\in\Omega$. In particular, the support and Hausdorff dimension of $\omega^X_{\Omega}$ are independent of the choice of $X$.

Given an $m$-adic grid $\Delta=\Delta^m(\RR^n)$, we let \begin{equation}\label{e:shift} \vec{\Delta}:=\left\{Q+\left(\frac{j_1}{m^{k+1}},\dots,\frac{j_n}{m^{k+1}}\right): Q\in\Delta,\;\side Q=m^{-k},\; j_1,\dots,j_n\in\ZZ\right\}\end{equation} denote the set of translates of $m$-adic cubes that are aligned with $m$-adic cubes of the next generation. The following lemma is modeled after \cite[Lemma 1]{Bourgain}. Results of this type are now collectively referred to as \emph{Bourgain's estimate} and have become a fundamental tool used to study absolute continuity of harmonic measure (see e.g.~\cite{HM-I,7author}). Also see \cite{cantor-dim,Batakis-Zdunik, azzam-dimension-drop} for applications of Bourgain's estimate to study the dimension of harmonic measure on special classes of domains.

\begin{figure}\begin{center}\includegraphics[width=.6\textwidth]{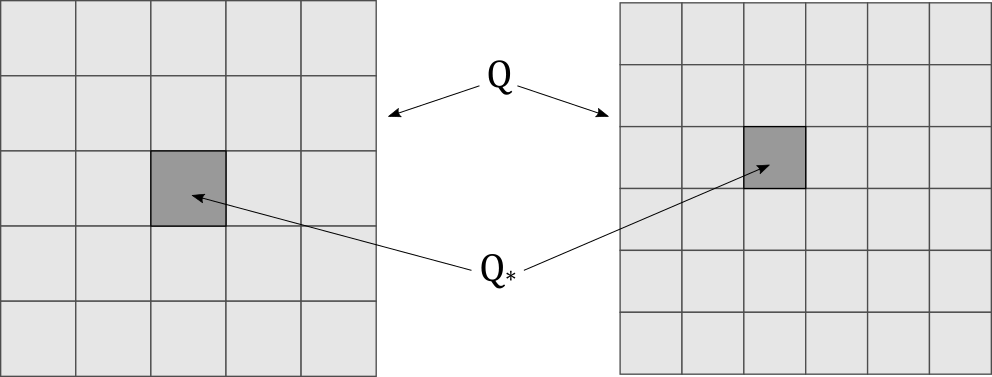}\end{center}\caption{Examples of $Q_*$ and $Q$ in Lemma \ref{alternative} when $m=5$ (left) and $m=6$ (right). The smaller cube $Q_*$ is the closure of a cube $P_*\in\Delta^m(\RR^2)$. The larger cube $Q$ is the interior of a cube $P\in\vec\Delta^m(\RR^2)$.}  \label{fig:odd-and-even}
\end{figure}

\begin{lemma}[Bourgain's estimate with better constants] \label{alternative} Fix $\Delta=\Delta^m(\RR^n)$ for some $n\geq 3$ and $m\geq 5$ with $m>\xi_m+2\sqrt{n}$, where we define $\xi_m:=1$, when $m$ is odd, and $\xi_m:=2$, when $m$ is even. Let $P\in\vec{\Delta}$ and $P_*\in\Delta$ be any cubes such that $\side P_*=m^{-1}\side P$ and $\overline{P_*}$ includes the center of $P$. Assign $Q:=\interior{P}$ and $Q_*:=\overline{P_*}$. For all closed sets $E\subset\RR^{n}$, poles $X\in Q_*\setminus E$, and dimensions $n-1<s\leq n$,
\begin{equation}\label{better-bourgain}
\left(\left(\frac{1}{\sqrt{n}}\right)^{n-2}-\left(\frac{2}{m-\xi_m}\right)^{n-2}\right)\frac{\Net^s_\infty(E\cap Q_*)}{(\side Q_*)^s}\leq O(n,m,s)\,\omega^X_{Q\setminus E}(E\cap Q),\end{equation} where \begin{equation}\label{O-def} O(n,m,s)=\min_{k\in\ZZ}\left(\left(\frac{2m^{k}}{(n-2)\sqrt{n}}\right)^{n-2} +\frac{\omega_n}{4} \left(\frac{mn}{n-2}\right)^{n-2}\frac{n^3m^{k(n-2-s)}}{1-m^{n-2-s}}\right).\end{equation}\end{lemma}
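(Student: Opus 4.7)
The plan is to adapt the classical potential-theoretic recipe behind Bourgain's estimate, tracking numerical constants sharply at every step. We may assume $E\cap Q_*\neq\emptyset$, since otherwise the left-hand side vanishes.

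Apply Lemma \ref{frostman} to the compact set $K=E\cap Q_*$ to produce a Radon measure $\mu$ supported in $K\subset\overline{Q_*}$ satisfying $\mu(R)\leq(\side R)^s$ and $\mu(\partial R)=0$ for every $R\in\Delta$, together with $\mu(\RR^n)\geq\Net^s_\infty(E\cap Q_*)$. Form the Newtonian potential $U^\mu(Y)=\int|Y-Z|^{-(n-2)}\,d\mu(Z)$. Because $s>n-1>n-2$, standard estimates show $U^\mu$ is bounded and continuous on $\RR^n$; moreover $U^\mu$ is harmonic on the open set $Q\setminus E\subset\RR^n\setminus K$. Consequently, the bounded continuous Dirichlet problem in $Q\setminus E$ yields
\[
U^\mu(X)=\int_{E\cap Q} U^\mu\,d\omega^X_{Q\setminus E}+\int_{\partial Q} U^\mu\,d\omega^X_{Q\setminus E},\qquad X\in Q_*\setminus E.
\]

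Next, estimate $U^\mu$ at three relevant locations. At the pole $X$, every $Z\in\mathrm{supp}\,\mu\subset\overline{Q_*}$ satisfies $|X-Z|\leq\diam Q_*=\sqrt{n}\,\side Q_*$, so $U^\mu(X)\geq(\sqrt{n}\,\side Q_*)^{-(n-2)}\mu(\RR^n)$. On $\partial Q$, the $m$-adic geometry forces $\mathrm{dist}(\partial Q,\overline{Q_*})\geq(m-\xi_m)\side Q_*/2$---verified directly for $m$ odd versus $m$ even, which is exactly where the piecewise definition of $\xi_m$ arises---hence $U^\mu(Y)\leq (2/(m-\xi_m))^{n-2}\mu(\RR^n)/(\side Q_*)^{n-2}$. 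Subtracting the $\partial Q$ integral from $U^\mu(X)$ and inserting $\mu(\RR^n)\geq\Net^s_\infty(E\cap Q_*)$ gives
\[
\int_{E\cap Q} U^\mu\,d\omega^X_{Q\setminus E}\geq\left[\left(\tfrac{1}{\sqrt{n}}\right)^{n-2}-\left(\tfrac{2}{m-\xi_m}\right)^{n-2}\right]\frac{\Net^s_\infty(E\cap Q_*)}{(\side Q_*)^{n-2}},
\]
with the bracket positive by the standing hypothesis $m>\xi_m+2\sqrt{n}$.

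To close, bound the left-hand side above by $\|U^\mu\|_{L^\infty(\RR^n)}\,\omega^X_{Q\setminus E}(E\cap Q)$ and divide by $(\side Q_*)^{s-(n-2)}$. The claim then reduces to showing $\|U^\mu\|_{L^\infty(\RR^n)}\leq O(n,m,s)(\side Q_*)^{s-(n-2)}$. I obtain this by splitting $U^\mu(Y)$ at a free radius $R_k=(n-2)\sqrt{n}\,\side Q_*/(2m^k)$, $k\in\ZZ$: the far-field contribution $\int_{|Y-Z|>R_k}|Y-Z|^{-(n-2)}\,d\mu\leq R_k^{-(n-2)}\mu(\RR^n)\leq R_k^{-(n-2)}(\side Q_*)^s$ is precisely the first term $(2m^k/((n-2)\sqrt{n}))^{n-2}(\side Q_*)^{s-(n-2)}$ of $O(n,m,s)$. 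The near-field contribution is decomposed into $m$-adic annuli at scales $m^{-j}\side Q_*$ for $j\geq k$; on each annulus the kernel is dominated by a power of $m^j$, and the $\mu$-mass of the relevant ball is controlled by covering it with $m$-adic cubes and invoking the Frostman bound $\mu(R)\leq(\side R)^s$. The resulting geometric series in $m^{n-2-s}$ converges (since $s>n-2$) to $1/(1-m^{n-2-s})$ and, after careful counting, produces the second term $(\omega_n/4)(mn/(n-2))^{n-2}n^3\,m^{k(n-2-s)}/(1-m^{n-2-s})$. Taking the infimum over $k\in\ZZ$ yields $O(n,m,s)$.

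The main technical obstacle is that last step: matching the explicit coefficients $(2/((n-2)\sqrt{n}))^{n-2}$ and $(\omega_n/4)(mn/(n-2))^{n-2}n^3$ in $O(n,m,s)$ requires both the specific choice of splitting radius $R_k$ and the sharpest available ball-to-cube packing count, so that no unnecessary dimensional factors are absorbed. The earlier steps are direct adaptations of Bourgain's original outline, made fully explicit here.
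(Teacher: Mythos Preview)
Your proposal is correct and follows essentially the same approach as the paper: Frostman measure on $K=E\cap Q_*$, Newtonian potential $u$, the lower bound on $u|_{Q_*}$ via $\diam Q_*$, the upper bound on $u|_{\partial Q}$ via $\gap(Q_*,\partial Q)=\tfrac{m-\xi_m}{2}\side Q_*$, and the global upper bound (your (E1)) via an annular decomposition at the optimized splitting radius $R_k=(n-2)\sqrt{n}\,\side Q_*/(2m^k)$, which is exactly the paper's choice $r_l m^{-(j_0+h)}$ with $k=h+l$. The only cosmetic difference is that the paper passes from $u$ to $\omega^X_{Q\setminus E}(E\cap Q)$ by applying the maximum principle to the auxiliary function $w=(u-\|u\|_{L^\infty(\partial Q)})/\|u\|_{L^\infty(\RR^n\setminus K)}$, whereas you use the integral representation $U^\mu(X)=\int U^\mu\,d\omega^X_{Q\setminus E}$ directly; both routes produce the same inequality.
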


\noindent \emph{Remark.} Here and below, we let $\omega_n$ denote the volume of the unit ball in $\RR^n$. When $m$ is odd, the cube $P_*$ is uniquely determined by $P$. When $m$ is even, there are $2^n$ possibilities for $P_*$ for each $P$. The gap (distance) between $P_*$ and $\partial P$ is $(m-\xi_m)/2$ times $\side P_*$. The value of $k$ that minimizes $O(n,m,s)$ depends on $n$, $\log_n(m)$, and $s$.

\begin{proof} Let $n$, $m$, $Q$, and $Q_*$ be given with the stated requirements. Let $E\subset\RR^{n}$ be closed. Because $Q_*$ is compact, $K:=E\cap Q_*$ is compact. Freeze $n-1<s\leq n$. By Lemma \ref{frostman}, there exists a finite Borel measure $\mu$ on $\RR^{n}$ with support in $K$ such that $\mu(R)\leq (\side R)^s$ for all $R\in\Delta$, $\mu(\partial R)=0$ for all $R\in\Delta$, and $\mu(\RR^n)=\mu(K)\geq \Net^s_\infty(K)$. Consider the harmonic function $u(X)=\int_K |X-Y|^{-(n-2)}\,d\mu(Y)$ defined on $\RR^{n}\setminus K$, which satisfies
\begin{equation*}\tag{E1} \label{estimate1}
    u(X) \leq O(n,m,s)\,(\side Q_*)^{s-(n-2)}
    \quad \forall\,X\in \RR^n\setminus K,
\end{equation*}
\begin{equation*}\tag{E2} \label{estimate2}
    u(X)\geq (\diam Q_*)^{-(n-2)}\mu(K)=\sqrt{n}^{-(n-2)}(\side Q_*)^{-(n-2)}\mu(K)\quad\forall\,X\in Q_*\setminus K,
\end{equation*}
\begin{equation*}\tag{E3} \label{estimate3}
    u(X)\leq (\gap(Q_*,\partial Q))^{-(n-2)}\mu(K)=\left(\frac{m-\xi_m}{2}\side Q_*\right)^{-(n-2)}\mu(K)\quad \forall\,X\in\partial Q,
\end{equation*} where $\gap(A,B)=\inf_{a\in A}\inf_{b\in B}|a-b|$ denotes the \emph{gap} between nonempty sets $A$ and $B$ (sometimes referred to as the \emph{distance} between $A$ and $B$). Of the three estimates, \eqref{estimate2} and \eqref{estimate3} are straightforward; we delay the proof of \eqref{estimate1} to the end of the lemma. Following \cite{Bourgain}, define an auxiliary harmonic function $w$ on $\RR^{n}\setminus K$ by setting $$w(X)=\frac{u(X)-\|u\|_{L^\infty(\partial Q)}}{\|u\|_{L^\infty(\RR^n\setminus K)}}\quad\text{for all }X\in\RR^{n}\setminus K,$$ where the norms denote the supremum of the continuous function $u$ on the specified sets. By design, $w\leq 0$ on $\partial Q$, $w$ is continuous on $\partial Q$, and $w\leq 1$ on all of $\RR^n\setminus K$. Hence $$\limsup_{X\rightarrow X_0} w(X) \leq \chi_K(X_0)\quad\text{for all }X_0\in\partial(Q\setminus K).$$ Thus, $w(X)\leq \omega^{X}_{Q\setminus K}(K)\leq \omega^X_{Q\setminus E}(E\cap Q)$ for every $X\in Q\setminus E$ by two applications of the maximum principle. Suppose that $X\in Q_*\setminus E=Q_*\setminus K$. Using (E1) to estimate the denominator in the definition of $w$ and (E2), (E3) to estimate the numerator, we obtain
\begin{align*}
    &\omega^X_{Q\setminus E}(Q\cap E)\geq \frac{\left((1/\sqrt{n})^{n-2} - (2/(m-\xi_m))^{n-2}\right)(\side Q_*)^{-(n-2)}\Net^s_\infty(K)}{O(n,m,s)\,(\side Q_*)^{s-(n-2)}}.
\end{align*} This is our desired estimate.

\begin{figure}\begin{center}\includegraphics[width=.6\textwidth]{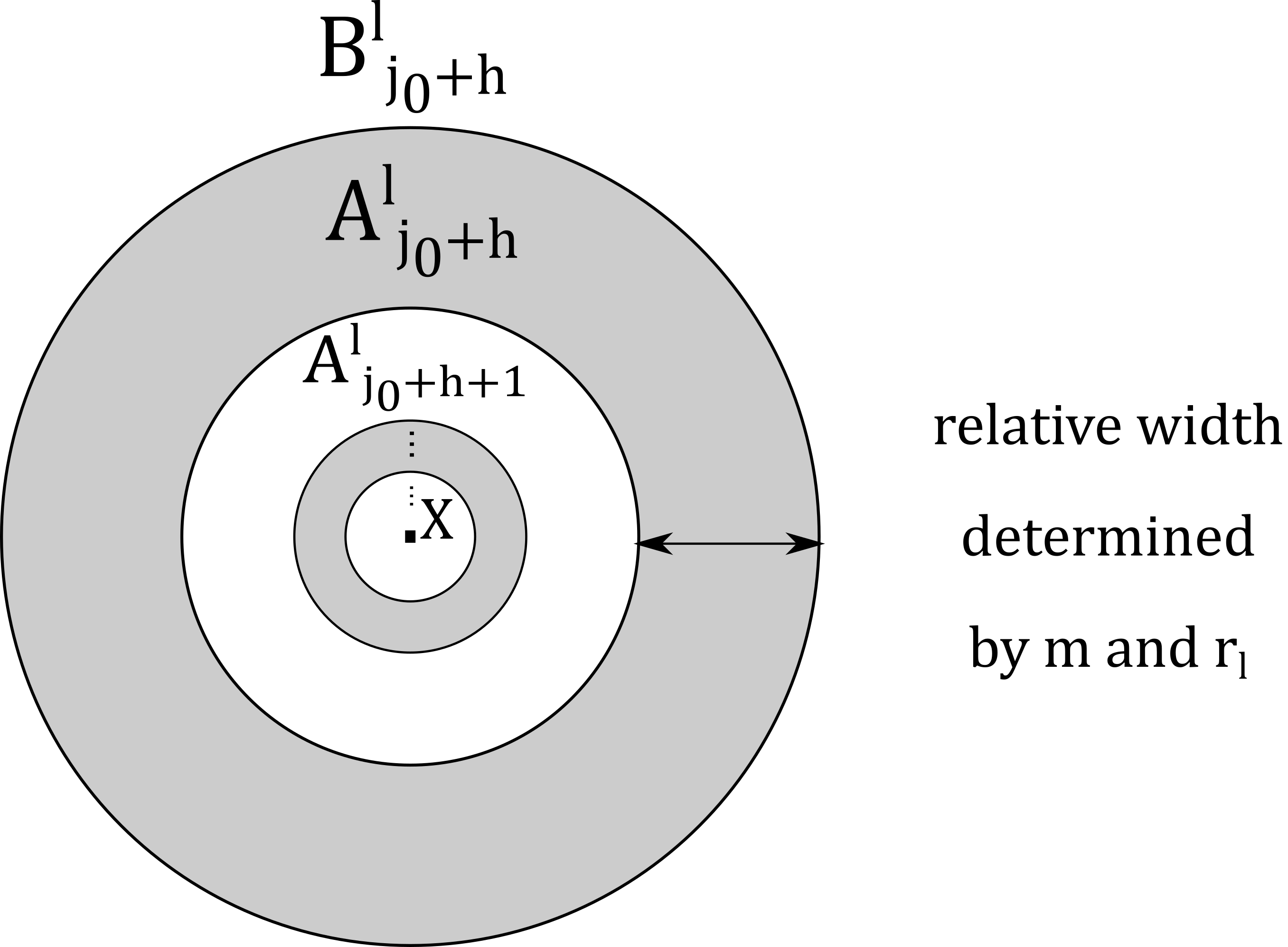}\end{center}\caption{Annular decomposition used in the proof of \eqref{estimate1}.}  \label{fig:annuli}
\end{figure}

It remains to verify \eqref{estimate1}, which is the main improvement over the corresponding lemma in \cite{Bourgain}. We will use an annular decomposition (see Figure \ref{fig:annuli}), but do not guess the geometry of the annuli in advance. Fix $X\in \RR^{n}\setminus K$ and let $j_0$ denote the integer such that $\side Q_*=m^{-j_0}.$ Recall that $\mu(R)\leq (\side R)^s$ for all $R\in\Delta$. Let $l\in\ZZ$ and let $r_l>0$ (depending on $l$) to be determined. Let $h\in\ZZ$ and write $K=B^l_{j_0+h}\cup \bigcup_{j=j_0+h}^\infty A_j^l$, where $$B^l_j:=\left\{Y\in K:|X-Y|\geq r_l m^{-j}\right\},\quad A_j^l:=\left\{Y\in K: r_l m^{-(j+1)}\leq |X-Y|<r_lm^{-j}\right\}.$$ On one hand, we may trivially estimate $\mu(B^l_{j_0+h})\leq \mu(Q_*)\leq m^{-j_0s}$. On the other hand, let $\omega_n$ denote the volume of the unit ball in $\RR^n$. Then $A_j^l$ is covered by $\lfloor \omega_n (r_l m^l+\sqrt{n})^n\rfloor$ or fewer cubes in $\Delta$ of side length $m^{-(j+l)}$. (To derive this, let $R$ represent a cube of side length $m^{-(j+l)}$; divide the volume of a ball of radius $r_lm^{-j}+\diam R$ by the volume of $R$.) Hence $\mu(A_j^l)\leq \omega_n (r_l m^l+\sqrt{n})^n m^{-(j+l)s}$. Therefore, \begin{align*}
u(X) &\leq (1/r_l)^{n-2}m^{(n-2)(j_0+h)}\mu (B^l_{j_0+h}) + \sum_{j=j_0+h}^\infty (1/r_l)^{n-2}m^{(n-2)(j+1)} \mu(A_j^l) \\
     &\leq (m^{h}/r_l)^{n-2} m^{j_0(n-2-s)} + (m/r_l)^{n-2} \omega_n (r_lm^l+\sqrt{n})^n m^{-ls}\sum_{j=j_0+h}^\infty m^{j(n-2-s)} \\
     &\leq \underbrace{\left((m^{h}/r_l)^{n-2} +\omega_n(m/r_l)^{n-2}(r_lm^l+\sqrt{n})^n \frac{m^{-ls+h(n-2-s)}}{1-m^{n-2-s}}\right)}_I(\side Q_*)^{s-(n-2)}.\end{align*} The quantity $(m/r_l)^{n-2}(r_lm^l+\sqrt{n})^n$ has a unique critical point and is minimized across all $r_l>0$ when $r_l=((n-2)\sqrt{n})/2m^l$.
Selecting this value yields \begin{align*} I&=\left(\frac{2m^{h+l}}{(n-2)\sqrt{n}}\right)^{n-2} +\omega_n\left(\frac{2m^{1+l}}{(n-2)\sqrt{n}}\right)^{n-2}\left(\frac{n\sqrt{n}}{2}\right)^n \frac{m^{-ls+h(n-2-s)}}{1-m^{n-2-s}}\\
     &=\left(\frac{2m^{h+l}}{(n-2)\sqrt{n}}\right)^{n-2} +\frac{\omega_n}{4} \left(\frac{mn}{n-2}\right)^{n-2}\frac{n^3m^{(h+l)(n-2-s)}}{1-m^{n-2-s}}.
     \end{align*} Letting $h+l$ range over arbitrary values $k\in\ZZ$, we arrive at \eqref{estimate1}.
\end{proof}

We use the following two special cases to prove Theorem \ref{main}.

\begin{corollary}\label{c-all-dim} For all $n\geq 3$, there exists $C_n>1$ depending only on $n$ such that for all $m>\xi_m+2\sqrt{n}$ and $n-1<s\leq n$, \begin{equation}\label{general-bourgain} \Net^s_\infty(E\cap Q_*)\leq C_n m^{n-2}\,\omega^X_{Q\setminus E}(E\cap Q)\, (\side Q_*)^s.\end{equation}\end{corollary}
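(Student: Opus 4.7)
The plan is to derive the corollary directly from Lemma \ref{alternative} by (a) bounding the left-hand prefactor from below by a positive quantity depending only on $n$, and (b) bounding the quantity $O(n,m,s)$ from above by a constant (depending only on $n$) times $m^{n-2}$. Dividing both sides of \eqref{better-bourgain} and absorbing constants will then yield \eqref{general-bourgain}.

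For (a), I would exploit the fact that $m$ ranges over integers and the hypothesis $m > \xi_m + 2\sqrt{n}$ is strict. Splitting on the parity of $m$, the threshold $\xi_m + 2\sqrt{n}$ takes only the two values $1 + 2\sqrt{n}$ and $2 + 2\sqrt{n}$, each producing a unique smallest admissible integer $m$. Hence $m - \xi_m \geq d_n$ for some $d_n > 2\sqrt{n}$ depending only on $n$, which forces the prefactor $(1/\sqrt{n})^{n-2} - (2/(m-\xi_m))^{n-2}$ to be at least some $c_n > 0$ depending only on $n$.

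For (b), I would take $k = 0$ in the minimum defining $O(n,m,s)$. The first summand then reduces to $(2/((n-2)\sqrt{n}))^{n-2}$, a constant depending only on $n$. For the second summand, since $s > n-1$ we have $n-2-s < -1$, and the hypothesis $m \geq 5$ (which is automatic from $m > \xi_m + 2\sqrt{n}$ when $n \geq 3$) gives $m^{n-2-s} < 1/m \leq 1/5$, so the denominator factor $1 - m^{n-2-s} \geq 4/5$. The second summand is therefore at most a constant multiple (depending only on $n$) of $m^{n-2}$. Since $m^{n-2} \geq 1$, the $m$-independent first summand is dominated by the second, giving $O(n,m,s) \leq C'_n m^{n-2}$ for a suitable $C'_n$.

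Combining (a) and (b), I set $C_n := \max(C'_n / c_n,\, 2)$ to guarantee both the bound and the stipulation $C_n > 1$. The only delicate point is verifying that $c_n > 0$: this is precisely where the integrality of $m$ and the strict inequality in the hypothesis are needed, since without the resulting integer gap the prefactor would degenerate as $m \searrow \xi_m + 2\sqrt{n}$. Once this gap is noted, the rest of the proof is routine arithmetic.
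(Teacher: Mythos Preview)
Your proposal is correct and follows the same route as the paper's own proof, which tersely says ``take $k=0$ or $k=1$'' in \eqref{O-def}; you have simply filled in the details the paper leaves to the reader, including the observation that $m-\xi_m$ is an even integer strictly exceeding $2\sqrt{n}$, which secures $c_n>0$. Your choice $k=0$ alone suffices (the paper's mention of $k=1$ is an alternative, not an additional requirement).
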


\begin{proof} In \eqref{O-def}, take $k=0$ or $k=1$.\end{proof}

\begin{corollary}\label{c-large-dim} For all $\delta>0$, there exists $n_\delta\geq 3$ such that if $n$ and $m$ are integers with $n\geq n_\delta$ and $m\geq \delta n$, and $n-1/2<s\leq n$, then \begin{equation}\label{great-bourgain} \Net^s_\infty(E\cap Q_*)\leq \delta (m\sqrt{2\pi e})^{n-2}\,\omega^X_{Q\setminus E}(E\cap Q)\, (\side Q_*)^s.\end{equation}
\end{corollary}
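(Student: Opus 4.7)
The plan is to apply Lemma~\ref{alternative} and carefully minimize $O(n,m,s)$, then use Stirling's formula to absorb the $(2\pi e)^{(n-2)/2}$ factor. Rearranging \eqref{better-bourgain}, I need to prove
\[
O(n,m,s)\;\leq\;\delta\,(m\sqrt{2\pi e})^{n-2}\Bigl[\bigl(\tfrac{1}{\sqrt{n}}\bigr)^{n-2}-\bigl(\tfrac{2}{m-\xi_m}\bigr)^{n-2}\Bigr].
\]
For $m\geq\delta n$, the ratio $(2\sqrt{n}/(m-\xi_m))^{n-2}\leq(2/(\delta\sqrt{n}-\xi_m/\sqrt{n}))^{n-2}$ tends to $0$ exponentially in $n$, so for $n$ large the bracket exceeds $\tfrac12(1/\sqrt{n})^{n-2}$. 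This reduces the task to establishing $O(n,m,s)\leq\tfrac{\delta}{2}(m\sqrt{2\pi e/n})^{n-2}$.

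The heart of the argument is to minimize $O(n,m,s)=\min_{k\in\ZZ}[A\,m^{k(n-2)}+B_0\,m^{k(n-2-s)}]$, where $A=(2/((n-2)\sqrt{n}))^{n-2}$ and $B_0=\omega_n n^3(mn/(n-2))^{n-2}/[4(1-m^{n-2-s})]$ collect the coefficients. Over real $k$, the unconstrained minimum is attained at $k_c$ defined by $m^{k_c s}=(s-n+2)B_0/((n-2)A)$, with value $f(k_c)=A^{\gamma}B_0^{\eta}/(\gamma^{\gamma}\eta^{\eta})$, where $\gamma=(s-n+2)/s$ and $\eta=(n-2)/s=1-\gamma$. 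Because $s\in(n-1/2,n]$ forces $\gamma=O(1/n)$ and $\eta\to 1$, the factor $\gamma^{\gamma}\eta^{\eta}$ stays bounded. Applying Stirling in the form $\omega_n\leq(2\pi e/n)^{n/2}/\sqrt{\pi n}$, a direct computation yields the key estimate
\[
\frac{f(k_c)}{(m\sqrt{2\pi e/n})^{n-2}}\;\lesssim\;\frac{1}{m^2\sqrt{n}}.
\]

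To handle the integer constraint, take $k^{*}=\lfloor k_c\rfloor$ and set $\epsilon=k_c-k^{*}\in[0,1)$. Using $L_1(k_c)=\gamma f(k_c)$ and $L_2(k_c)=\eta f(k_c)$ from the balance relation, one computes
\[
f(k^{*})=f(k_c)\bigl[\gamma\,m^{-\epsilon(n-2)}+\eta\,m^{\epsilon(s-n+2)}\bigr]\;\leq\; 2f(k_c)\,m^{s-n+2}\;\leq\; 2m^{2}f(k_c),
\]
where the last bound uses $s\leq n$. Combining with the continuous-optimum estimate gives $O(n,m,s)/(m\sqrt{2\pi e/n})^{n-2}\lesssim 1/\sqrt{n}$, which is at most $\delta/2$ once $n\geq n_\delta$ with $n_\delta$ of order $\delta^{-2}$. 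This proves the desired inequality.

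The main obstacle is controlling the loss from the integer constraint on $k$: the naive choice $k^{*}=\lceil k_c\rceil$ would cost a catastrophic factor of $m^{n-2}$, whereas $k^{*}=\lfloor k_c\rfloor$ replaces this with the much smaller factor $m^{s-n+2}\leq m^{2}$. This is just small enough for the $1/(m^2\sqrt{n})$ savings from the continuous minimum to absorb it and still leave a decaying $1/\sqrt{n}$ behind.
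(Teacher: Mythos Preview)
Your approach differs genuinely from the paper's: the paper simply fixes $k=1$ in \eqref{O-def} and estimates the two resulting terms using Stirling's formula for $\omega_n$ and $(n/(n-2))^{n-2}\sim e^2$, whereas you optimize over real $k$ and then round down. For $m$ of order $n$ the continuous optimum $k_c$ sits just below $2$, so your $k^*=\lfloor k_c\rfloor$ is typically $1$ and you end up at the paper's choice after considerably more work; the optimization buys nothing.

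There is a real gap in your ``key estimate'' $f(k_c)/(m\sqrt{2\pi e/n})^{n-2}\lesssim 1/(m^2\sqrt n)$. A direct computation gives
\[
\frac{f(k_c)}{T}\;\asymp\; n^{3/2}\Bigl(\tfrac{2}{(n-2)\,m\sqrt{2\pi e}}\Bigr)^{p},\qquad p=(n-2)\gamma=\frac{(n-2)(s-n+2)}{s}.
\]
For $s=n$ one has $p\to 2$ and your bound follows. But for $s$ near $n-\tfrac12$ the exponent tends to $3/2$, so only $f(k_c)/T\lesssim m^{-3/2}$ holds; after multiplying by your rounding loss $m^{s-n+2}\approx m^{3/2}$ the ratio is merely $O(1)$, not $O(n^{-1/2})$. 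Thus the inequality $\le\delta/2$ cannot be forced uniformly over $s\in(n-\tfrac12,n]$ by taking $n$ large, and the asserted $n_\delta\sim\delta^{-2}$ is unsupported.

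The paper's choice $k=1$ yields $O(n,m,s)/T\lesssim n^{3/2}m^{n-2-s}$, which is the same quantity and has the same non-uniformity: for $m\sim\delta n$ and $s\to(n-\tfrac12)^+$ it is bounded by $\delta^{-3/2}$ but does not tend to $0$. (The paper's line ``$\sqrt{n}^{\,3}m^{n-2-s}=o(1)$'' is not correct uniformly in $s$ on the stated range.) So your route and the paper's share the same defect at the endpoint $s=n-\tfrac12$; the paper's argument is simply shorter, and your continuous-minimum detour does not repair the issue.
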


\begin{proof} Let $\delta>0$ be given and fix $\epsilon>0$ to be specified below. Suppose that $n$ and $m$ are integers with $n\geq 3$ and $m\geq \delta n$. Let us agree to write $c_n\sim d_n$ if $c_n$ is \emph{asymptotic} to $d_n$ as $n\rightarrow\infty$ in the sense that $\lim_{n\rightarrow\infty} c_n/d_n = 1$ and further agree to write $c_n=o(d_n)$ if $\lim_{n\rightarrow\infty} c_n/d_n=0$. Using Stirling's formula for $\Gamma(x)$ (e.g.~\cite{a-proof-of-stirling}) and   $\omega_n=\pi^{n/2}/\Gamma(\frac{n}{2}+1)$ (e.g.~\cite[Appendix A]{HFT}), we see that $$\omega_n\sim \frac{1}{\sqrt{\pi n}} \left(\frac{\sqrt{2\pi e}}{\sqrt{n}}\right)^n=\frac{2\sqrt{\pi}e}{\sqrt{n}^3}\left(\frac{\sqrt{2\pi e}}{\sqrt{n}}\right)^{n-2}.$$ We also have $(n/(n-2))^{n-2}\sim e^2$.  Choosing $k=1$ in \eqref{O-def}, it follows from the noted asymptotic estimates that for sufficiently large $n$, depending only on $\epsilon$, \begin{align*}
\frac{O(n,m,s)}{(1+\epsilon)e^2} &\leq \left(\frac{2m}{n\sqrt{n}}\right)^{n-2} + \frac{\sqrt{\pi}e}{2} \sqrt{n}^{3}m^{n-2-s}\left(\frac{m\sqrt{2\pi e}}{\sqrt{n}}\right)^{n-2},\end{align*} where the reader may observe that we also absorbed the factor $(1-m^{n-2-s})^{-1}$ appearing in $O(n,m,s)$ into the error on the left hand side. Because $m\geq \delta n$ and $-2\leq n-2-s<-3/2$, the factor $\sqrt{n}^3 m^{n-2-s}=o(\max\{\delta^{-2},\delta^{-3/2}\})=o(1)$. Also, $(2/n)^{n-2} = o(\sqrt{2\pi e}^{\,n-2})$. Thus, taking $n$ to be sufficiently large depending only on $\delta$ and $\epsilon$, we have \begin{equation}\label{apple1} O(n,m,s) \leq \epsilon(1+\epsilon)e^2\left(\frac{m\sqrt{2\pi e}}{\sqrt{n}}\right)^{n-2}.\end{equation} Next, because $m\geq \delta n$, we can estimate \begin{equation}\begin{split} \label{apple2} \left(\frac{1}{\sqrt{n}}\right)^{n-2}-\left(\frac{2}{m-\xi_m}\right)^{n-2}\geq (1+\epsilon)^{-1}\sqrt{n}^{-(n-2)}\end{split}\end{equation} for all sufficiently large $n$ depending only on $\epsilon$ and $\delta$. Combining \eqref{better-bourgain}, \eqref{apple1}, and \eqref{apple2}, we conclude that for all $n$ sufficiently large depending only on $\epsilon$ and $\delta$, \begin{align*}\Net^s_\infty(E\cap Q_*)&\leq \epsilon(1+\epsilon)^2 e^2(m\sqrt{2\pi e})^{n-2} \omega^X_{Q\setminus E}(E\cap Q)(\side Q_*)^s.\end{align*} Specifying that $\epsilon(1+\epsilon)^2 e^2=\delta$ yields \eqref{great-bourgain}.
\end{proof}

We use the next two special cases to prove Theorem \ref{lowdim}.

\begin{corollary}\label{c-3} Suppose $n=3$. For all integers $m\geq 5$ and dimensions $2.999999\leq s\leq 3$, \begin{equation}\left(\frac{1}{\sqrt{3}}-\frac{2}{m-\xi_m}\right)\frac{\Net^s_\infty(E\cap Q_*)}{(\side Q_*)^s}\leq \left(\frac{2}{\sqrt{3}}m + 27\pi\frac{m^{-0.999999}}{1-m^{-1.999999}}\right)\omega^X_{Q\setminus E}(E\cap Q).\end{equation}\end{corollary}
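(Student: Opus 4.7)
The plan is to apply Lemma \ref{alternative} directly with $n=3$, making the factor $O(n,m,s)$ explicit by choosing $k=1$ in \eqref{O-def} and bounding the resulting expression uniformly in $s\in[2.999999,3]$.

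First I would verify the dimensional hypothesis $m>\xi_m+2\sqrt{n}$ for $n=3$ and $m\geq 5$. When $m=5$ (odd), $\xi_m=1$ and $1+2\sqrt{3}\approx 4.46<5$; when $m=6$ (even), $\xi_m=2$ and $2+2\sqrt{3}\approx 5.46<6$; the inequality only strengthens as $m$ grows, so the hypothesis of Lemma \ref{alternative} holds throughout the claimed range. The coefficient on the left-hand side of \eqref{better-bourgain} then reduces directly to $1/\sqrt{3}-2/(m-\xi_m)$, matching the target.

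Next I would substitute $n=3$ into \eqref{O-def}. Using $(n-2)\sqrt{n}=\sqrt{3}$, $mn/(n-2)=3m$, $\omega_3/4=\pi/3$, $n^3=27$, and $n-2-s=1-s$, the minimand in \eqref{O-def} becomes
$$\frac{2m^k}{\sqrt{3}}+\frac{27\pi\,m^{1+k(1-s)}}{1-m^{1-s}}.$$
Taking $k=1$ collapses the exponent to $1+(1-s)=2-s$, producing
$$O(3,m,s)\leq \frac{2m}{\sqrt{3}}+\frac{27\pi\,m^{2-s}}{1-m^{1-s}}.$$

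To turn this into a bound that is uniform in $s\in[2.999999,3]$ I would observe that since $m\geq 5>1$, both $s\mapsto m^{2-s}$ and $s\mapsto 1/(1-m^{1-s})$ are (weakly) decreasing on this interval, hence each is maximized at the endpoint $s=2.999999$. Substituting these worst-case values yields
$$O(3,m,s)\leq \frac{2m}{\sqrt{3}}+\frac{27\pi\,m^{-0.999999}}{1-m^{-1.999999}},$$
and plugging this, together with the explicit left-hand coefficient, into \eqref{better-bourgain} gives exactly the stated inequality. There is no substantial obstacle here: the corollary is a direct instantiation of Lemma \ref{alternative} with the choice $k=1$, and the only care required is the brief monotonicity check that delivers the $s$-uniform bound.
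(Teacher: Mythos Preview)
Your proof is correct and follows exactly the paper's approach: specialize Lemma \ref{alternative} to $n=3$, choose $k=1$ in \eqref{O-def}, and bound the $s$-dependent factor $m^{2-s}/(1-m^{1-s})$ by its value at $s=2.999999$. The extra details you supply (the check that $m>\xi_m+2\sqrt{3}$ for $m\ge 5$ and the monotonicity argument) are sound and simply flesh out what the paper's one-line proof leaves implicit.
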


\begin{proof} In \eqref{O-def}, take $n=3$ and $k=1$. Bound the factor $m^{2-s}/(1-m^{1-s})$ appearing in $O(3,m,s)$ using the assumption $s\geq 2.999999$. \end{proof}

\begin{corollary}\label{c-4} Suppose $n=4$. For all integers $m\geq 7$ and dimensions $3.999999\leq s\leq 4$, \begin{equation}\left(\frac{1}{4}-\left(\frac{2}{m-\xi_m}\right)^2\right)\frac{\Net^s_\infty(E\cap Q_*)}{(\side Q_*)^s}\leq\left(\frac{1}{4}m^2 + 32\pi^2 \frac{m^{0.000001}}{1-m^{-1.999999}}\right)\omega^X_{Q\setminus E}(E\cap Q).\end{equation} \end{corollary}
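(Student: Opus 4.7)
The plan is to imitate the proof of Corollary \ref{c-3} verbatim, specializing Lemma \ref{alternative} to $n=4$ and selecting $k=1$ in the minimum defining $O(n,m,s)$ in \eqref{O-def}. First I would verify that the hypothesis $m\geq 7$ is compatible with the standing requirement $m>\xi_m+2\sqrt{n}$ of Lemma \ref{alternative}: with $n=4$ this becomes $m>\xi_m+4$, which one checks case by case for $m=7$ (odd, $\xi_m=1$) and $m=8$ (even, $\xi_m=2$), and then follows by monotonicity in $m$. With $n=4$ confirmed admissible, the prefactor $(1/\sqrt{n})^{n-2}-(2/(m-\xi_m))^{n-2}$ on the left side of \eqref{better-bourgain} collapses to exactly $1/4-(2/(m-\xi_m))^2$, matching the target inequality.

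Next I would evaluate $O(4,m,s)$ at $k=1$. The first summand in \eqref{O-def} becomes $\bigl(2m/((4-2)\sqrt{4})\bigr)^{2}=(m/2)^{2}=m^{2}/4$. For the second summand, I would substitute $\omega_4=\pi^{2}/2$ and collect the numerical constants $(\omega_4/4)(4m/2)^{2}\cdot 4^{3}=32\pi^{2}m^{2}$, leaving the $s$-dependent piece as $m^{2-s}/(1-m^{2-s})$; that is, the second summand simplifies to $32\pi^{2}\,m^{4-s}/(1-m^{2-s})$.

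Finally, I would invoke the hypothesis $3.999999\leq s\leq 4$ to bound the $s$-dependent quantities uniformly in $s$. Since $0\leq 4-s\leq 0.000001$ and $m\geq 1$, we have $m^{4-s}\leq m^{0.000001}$; since $-2\leq 2-s\leq -1.999999$, we have $m^{2-s}\leq m^{-1.999999}$ and hence $1-m^{2-s}\geq 1-m^{-1.999999}>0$ (using $m\geq 7$ to guarantee strict positivity of the denominator). Feeding the resulting bound $O(4,m,s)\leq m^{2}/4 + 32\pi^{2}\,m^{0.000001}/(1-m^{-1.999999})$ into \eqref{better-bourgain} yields the corollary. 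There is no substantive obstacle here: the proof is a mechanical specialization of Lemma \ref{alternative}, and the only point requiring care is tracking the direction of the inequalities in the last step so that both $m^{4-s}$ and the lower bound on $1-m^{2-s}$ move in the favorable direction.
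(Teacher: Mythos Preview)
Your proof is correct and takes essentially the same approach as the paper: specialize Lemma \ref{alternative} to $n=4$, choose $k=1$ in \eqref{O-def}, and bound the factor $m^{4-s}/(1-m^{2-s})$ using $s\geq 3.999999$. You have simply filled in the arithmetic and the admissibility check $m>\xi_m+2\sqrt{4}$ that the paper leaves implicit.
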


\begin{proof} In \eqref{O-def}, take $n=4$ and $k=1$. Bound the factor $m^{4-s}/(1-m^{2-s})$ appearing in $O(4,m,s)$ using the assumption $s\geq 3.999999$.\end{proof}

A \emph{rectangle} in $\RR^{n}$ is a set of the form $[x_1,x_1+s_1]\times\cdots\times [x_n,x_n+s_n]$ with $s_1,\dots,s_n>0$. Iterating the strong Markov property, one gets an estimate on harmonic measure of the portion of the boundary lying inside a sequence of nested rectangles. Brownian motion cannot reach the innermost rectangle without passing through the outer rectangles.

\begin{lemma}\label{c-nested} Let $n\geq 3$ and let $\Omega\subsetneq\RR^{n}$ be a domain. Let $H_1,\dots,H_k$ be rectangles in $\RR^{n}$ that are strictly nested in the sense that $H_k\subset\interior{H_{k-1}}$, $H_{k-1}\subset\interior H_{k-2}$, \dots, $H_2\subset\interior{H_1}$. Write $G'_i=\Omega\cap\partial(\Omega\setminus H_i)\subset\partial H_i=G_i$ for each $i$. If $X\in \Omega\setminus H_1$, then \begin{equation}\omega^{X}_\Omega(H_k)\leq \omega^{X}_{\Omega\setminus H_1}(G'_1)\left(\sup_{X_1\in G'_1} \omega^{X_1}_{\Omega\setminus H_2}(G'_2)\right)
\cdots\left(\sup_{X_{k-1}\in G'_{k-1}}\omega^{X_{k-1}}_{\Omega\setminus H_{k}}(G_k)\right).\end{equation} (Except for the final instance, $G_k$, all instances of a `$G$' in the formula are $G'_i$.)\end{lemma}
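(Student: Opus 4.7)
The plan is to iterate the strong Markov property (equivalently, the restriction identity for harmonic measure at an intermediate boundary) from the outermost rectangle $H_1$ inward to $H_k$. The key analytic input at each step is the identity: for any open set $U$ with $\overline{U} \subset \RR^n$, any pole $X \in \Omega \setminus \overline{U}$, and any Borel set $E \subset \partial\Omega$,
\[ \omega^X_\Omega(E) = \int_{\partial(\Omega \setminus \overline{U})} \omega^Y_\Omega(E) \, d\omega^X_{\Omega \setminus \overline{U}}(Y), \]
with the convention $\omega^Y_\Omega(E) := \mathbf{1}_E(Y)$ for $Y \in \partial\Omega$. This holds because both sides, viewed as functions of $X$, are bounded harmonic solutions of the same Dirichlet problem on $\Omega \setminus \overline{U}$; probabilistically, it is just the strong Markov property at the first exit time from $\Omega \setminus \overline{U}$.

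Taking $E = H_k \cap \partial\Omega$ and $U = \interior H_1$, the boundary $\partial(\Omega \setminus H_1)$ decomposes as $G'_1 \cup (\partial\Omega \setminus \interior H_1)$. The strict nesting $H_k \subset \interior H_1$ kills the second piece, since every $Y$ there satisfies $\mathbf{1}_{H_k}(Y) = 0$, leaving
\[ \omega^X_\Omega(H_k) \leq \omega^X_{\Omega \setminus H_1}(G'_1) \cdot \sup_{X_1 \in G'_1} \omega^{X_1}_\Omega(H_k). \]
I would iterate this with $U = \interior H_{i+1}$ for $i = 1, \ldots, k-2$; a pole $X_i \in G'_i$ automatically lies in $\Omega \setminus H_{i+1}$ because $H_{i+1} \subset \interior H_i$ is disjoint from $\partial H_i \supset G'_i$, and the containment $H_k \subset \interior H_{i+1}$ again discards the $\partial\Omega \setminus \interior H_{i+1}$ contribution. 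After $k-1$ iterations the problem reduces to bounding $\sup_{X_{k-1} \in G'_{k-1}} \omega^{X_{k-1}}_\Omega(H_k)$.

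For this terminal factor there is no further enveloping rectangle, so I would apply the trivial bound $\omega^{X_{k-1}}_\Omega(H_k) \leq \omega^{X_{k-1}}_{\Omega \setminus H_k}(G_k)$, valid because any exit path from $X_{k-1} \in \Omega \setminus H_k$ that first exits $\Omega$ in $H_k$ must cross $\partial H_k = G_k$. This is precisely where the statement records $G_k$ rather than $G'_k$: lacking a strictly larger enveloping rectangle, one can no longer discard the $\partial\Omega \cap H_k$ contribution. Chaining the inequalities produces the claimed product.

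The main subtlety is the asymmetric role of $H_k$ versus $H_1, \ldots, H_{k-1}$ and bookkeeping why $H_k \subset \interior H_{i+1}$ is what makes each intermediate step lossless. If $\Omega$ is unbounded, a remark is in order that in $\RR^n$ with $n \geq 3$ the point at infinity is part of $\partial\Omega$ but cannot lie in the bounded set $H_k$, so the argument proceeds unchanged; alternatively one truncates with a large ball and passes to the limit. Otherwise the computation is routine.
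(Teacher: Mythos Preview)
Your proposal is correct and follows essentially the same approach as the paper: both arguments iterate the strong Markov property (the restriction identity for harmonic measure) from $H_1$ inward, use the strict nesting $H_k\subset\interior H_{i}$ to annihilate the $\partial\Omega$ portion of each intermediate boundary, and invoke the maximum principle for the terminal bound $\omega^{X_{k-1}}_\Omega(H_k)\leq \omega^{X_{k-1}}_{\Omega\setminus H_k}(G_k)$. The paper packages this as a formal induction on $k$ while you unroll the iteration directly, but the content is identical.
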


\begin{proof} We induct on the number of rectangles. The base case $\omega^X_\Omega(H_1)\leq \omega^{X}_{\Omega\setminus H_1}(G_1)$ holds by the maximum principle. For the induction step, suppose that the lemma holds with $k$ nested rectangles for some $k\geq 1$. Let $H_1,\cdots, H_{k+1}$ be rectangles with $H_{j+1}\subset \interior{H_{j}}$ for all $1\leq j\leq k$ and fix $X\in\Omega\setminus H_1$. Note that every $X_1\in G'_1$ lies outside of $H_2$. Thus, the inductive hypothesis applied with $H_2,\dots, H_{k+1}$ guarantees that $$\omega^{X_1}_\Omega(H_{k+1})\leq \omega^{X_1}_{\Omega\setminus H_2}(G'_2)\left(\sup_{X_2\in G'_2} \omega^{X_2}_{\Omega\setminus H_3}(G'_3)\right)
\cdots\left(\sup_{X_{k}\in G'_{k}}\omega^{X_{k}}_{\Omega\setminus H_{k+1}}(G_{k+1})\right).$$ (When $k=1$, this formula should be read as $\omega^{X_1}_\Omega(H_2)\leq \omega^{X_1}_{\Omega\setminus H_2}(G_2)$.) Since $\Omega\setminus H_1\subset \Omega$, the strong Markov property (e.g.~see \cite[p.~117]{Doob}) ensures that \begin{align*}
\omega^{X}_\Omega(H_{k+1}) &= \omega^{X}_{\Omega\setminus H_1}(H_{k+1}\cap\partial\Omega)
   + \int_{\Omega\cap \partial(\Omega\setminus H_1)} \omega^{X_1}_{\Omega}(H_{k+1})\,d\omega^{X}_{\Omega\setminus H_1}(X_1)\\
&= \int_{G'_1} \omega^{X_1}_{\Omega}(H_{k+1})\,d\omega^{X}_{\Omega\setminus H_1}(X_1)
\leq \omega^{X}_{\Omega\setminus H_1}(G'_1)\sup_{X_1\in G'_1} \omega^{X_1}_{\Omega}(H_{k+1}),\end{align*} where $\omega^{X}_{\Omega\setminus H_1}(H_{k+1}\cap\partial\Omega)=0$ trivially, since $H_{k+1}$ is contained in the exterior of $\Omega\setminus H_1$. Combining the two displayed equations gives the desired inequality for $H_1,\dots,H_{k+1}$.\end{proof}

A nearly identical argument gives the following dual inequality.

\begin{lemma}\label{c-lower-bound} Let $n\geq 3$ and let $\Omega\subsetneq\RR^{n}$ be a domain. Let $H_1, H_2$ be rectangles in $\RR^{n}$ with $H_2\subset \interior{H_1}$. Write $G'_i=\Omega\cap \partial(\Omega\setminus H_i)\subset \partial H_i=G_i$ for all $i$. If $X\in \Omega\setminus H_1$, then \begin{equation}\omega^{X}_\Omega(H_2)\geq \omega^X_{\Omega\setminus H_1}(G'_1)\left(\inf_{X_1\in G'_1} \omega^{X_1}_{\Omega\setminus H_2}(G_2)\right).\end{equation}
\end{lemma}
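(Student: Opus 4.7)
The plan is to follow the proof of Lemma~\ref{c-nested} essentially verbatim, replacing the supremum in the strong-Markov decomposition by an infimum to produce a lower bound. Since only two rectangles are involved, no induction is needed.

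Starting from $X\in\Omega\setminus H_1$, I would apply the strong Markov property at the first exit time of $\Omega\setminus H_1$ to obtain
\[
\omega^X_\Omega(H_2)=\omega^X_{\Omega\setminus H_1}(H_2\cap\partial\Omega)+\int_{G'_1}\omega^{X_1}_\Omega(H_2)\,d\omega^X_{\Omega\setminus H_1}(X_1).
\]
Exactly as in the proof of Lemma~\ref{c-nested}, the first term vanishes: $H_2\subset\interior H_1$, while Brownian paths that exit $\Omega\setminus H_1$ through $\partial\Omega$ do so in $\partial\Omega\setminus\interior H_1$, which is disjoint from $H_2$. Bounding the remaining integrand from below by its infimum over $G'_1$ yields
\[
\omega^X_\Omega(H_2)\geq \omega^X_{\Omega\setminus H_1}(G'_1)\,\inf_{X_1\in G'_1}\omega^{X_1}_\Omega(H_2).
\]

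To close the argument, for every $X_1\in G'_1$ I would establish the pointwise bound $\omega^{X_1}_\Omega(H_2)\geq\omega^{X_1}_{\Omega\setminus H_2}(G_2)$. Because $X_1\in\partial H_1\cap\Omega$ and $H_2\subset\interior H_1$, we have $X_1\in\Omega\setminus H_2$, and a second application of the strong Markov property at the exit time of $\Omega\setminus H_2$ decomposes the left-hand side in terms of Brownian paths from $X_1$ that first exit $\Omega\setminus H_2$ through $G_2=\partial H_2$ (thereby landing in the closed rectangle $H_2$) and those that exit through the residual piece of $\partial\Omega$.

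The main obstacle, and the only place where the reasoning departs from a verbatim transcription of Lemma~\ref{c-nested}, is the careful bookkeeping on $\partial H_2\cap\partial\Omega$: both $\omega^{X_1}_\Omega$ and $\omega^{X_1}_{\Omega\setminus H_2}$ can charge this overlap, and one must verify that the resulting inequality points in the intended direction. Once that check is carried out, combining the two displayed estimates produces the claimed lower bound.
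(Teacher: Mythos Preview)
Your first displayed estimate is correct and is exactly the ``nearly identical argument'' the paper points to: apply the strong Markov property at the exit time from $\Omega\setminus H_1$, drop the nonnegative boundary term (which in fact vanishes since $H_2\subset\interior H_1$ is disjoint from $\overline{\Omega\setminus H_1}$), and bound the integrand below by its infimum over $G'_1$. This yields
\[
\omega^X_\Omega(H_2)\;\geq\;\omega^X_{\Omega\setminus H_1}(G'_1)\,\inf_{X_1\in G'_1}\omega^{X_1}_\Omega(H_2),
\]
and this intermediate inequality is precisely what the paper actually invokes in \eqref{first-land}.

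Your second step, however, cannot be completed: the pointwise bound $\omega^{X_1}_\Omega(H_2)\geq\omega^{X_1}_{\Omega\setminus H_2}(G_2)$ is false in general. The maximum principle---indeed, the very base case of Lemma~\ref{c-nested}---gives the \emph{reverse} inequality $\omega^{X_1}_\Omega(H_2)\leq\omega^{X_1}_{\Omega\setminus H_2}(G_2)$, so your instinct that the bookkeeping on $\partial H_2\cap\partial\Omega$ is delicate was well founded: the inequality simply points the wrong way. Concretely, take $\Omega$ to be a large open ball in $\RR^3$, $H_1=[-2,2]^3$, $H_2=[-1,1]^3$, and any $X\in\Omega\setminus H_1$; then $H_2\cap\partial\Omega=\emptyset$, so $\omega^X_\Omega(H_2)=0$, while both factors on the right-hand side of the lemma are strictly positive. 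Thus the statement as printed appears to carry a slip in the second factor; the ``nearly identical argument'' only delivers the bound with $\omega^{X_1}_\Omega(H_2)$ inside the infimum, and that is what you should record.
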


\section{Bounding Bourgain's constant from below}\label{sec:parameters}

Recall that Bourgain's constant $b_n$ is the largest value such that the upper Hausdorff dimension of harmonic measure is at most $n-b_n$ for all domains $\Omega\subset\RR^n$. The following theorem is based on the demonstration in \cite{Bourgain} that $b_3>0$ and implements ideas from \cite{dim-caloric}. It reduces the problem of bounding $b_n$ from below to estimation of constants appearing in Bourgain's estimate and selection of parameters $m$, $\eta$, $h$, and $d$ satisfying the constraint \eqref{gamma-def}. By working exclusively with $m$-adic cubes and net contents---without passing through Hausdorff contents---we avoid introducing an unnecessary source of error as was done in the original argument. This is important in the context of Theorem \ref{lowdim}.

\begin{theorem}\label{t:parameters} Let $n\geq 3$ and let $m\geq 5$. Suppose that $\epsilon>0$ and $\alpha>0$ are constants such that Bourgain's estimate holds in the sense that for all $Q_*$ and $Q$, as in Lemma \ref{alternative}, for all closed sets $E\subset\RR^n$, for all $X\in Q_*\setminus E$, and for all $n-\epsilon<s\leq n$, we have \begin{equation}\label{alpha-bourgain}\Net^s_\infty(E\cap Q_*)  \leq \alpha\, \omega^X_{Q\setminus E}(E\cap Q) (\side Q_*)^s.\end{equation} Let $\eta>0$ be any number such that $(2-m^{-n})\alpha\eta\leq 1-m^{-n}$. Finally, suppose that $1\leq h<m/2$ and $d\geq 1$ are integers such that \begin{equation}\label{gamma-def}\gamma:=(1-(1-2h/m)^n)^{1/2} + (1-2h/m)^{n/2}\eta^{-1/2}(1-\eta)^{hm^{d-1}/2}<1.\end{equation} Then $b_n\geq \lambda\rho/(\lambda+\rho)$, where \begin{equation}\label{lambda-and-rho} \lambda:=-\log_m(\gamma)\quad\text{and}\quad \rho:=\min\{\epsilon,0.914186(1-\alpha\eta)(1-m^{-n})m^{-(d+1)n}/\ln(m)\}.\end{equation}
\end{theorem}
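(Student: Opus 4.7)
My plan is to invoke Lemma \ref{dim-lemma} directly. Given an arbitrary domain $\Omega \subsetneq \RR^n$, let $\mu = \omega^{X_0}_\Omega$ and let $E \subset \overline{\partial\Omega}$ be a Borel set of full $\mu$-measure; by Harnack's inequality, I may take $X_0$ outside whichever $m$-adic cube is currently under scrutiny. To conclude $b_n \geq \lambda\rho/(\lambda+\rho)$, it suffices to verify, for each $Q \in \Delta$ with $\side Q \leq 1$, at least one of the two alternatives of Lemma \ref{dim-lemma}. If $\Net^{n-\rho}_{m^{-1}\side Q}(E \cap Q) < (\side Q)^{n-\rho}$, alternative (1) holds and we are done at $Q$. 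Assume henceforth that this fails and I will establish alternative (2), using that $m^{-\lambda} = \gamma$ by the definition of $\lambda$.

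Let $Q'$ be the concentric sub-cube of $Q$ with $\side Q' = (1-2h/m)\side Q$, equal to the union of the $(m-2h)^n$ children of $Q$ at depth $\geq h$ from $\partial Q$. Split $\Child(Q)$ into inner children (inside $Q'$) and outer children. Two applications of Cauchy--Schwarz give
\begin{align*}
\sum_{R \not\subset Q'} \mu(R)^{1/2}(\vol R)^{1/2} &\leq \bigl(1-(1-2h/m)^n\bigr)^{1/2}\,\mu(Q)^{1/2}(\vol Q)^{1/2},\\
\sum_{R \subset Q'} \mu(R)^{1/2}(\vol R)^{1/2} &\leq (1-2h/m)^{n/2}\,\mu(Q')^{1/2}(\vol Q)^{1/2}.
\end{align*}
The first bound matches the first term of $\gamma\cdot\mu(Q)^{1/2}(\vol Q)^{1/2}$ exactly; the second matches the second term precisely when
\begin{equation*}
\mu(Q') \leq \eta^{-1}(1-\eta)^{hm^{d-1}}\mu(Q).
\end{equation*}
Establishing this relative bound is what remains.

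For the relative bound, I would construct a chain of $hm^{d-1}+1$ strictly nested closed rectangles $H_1 \supsetneq H_2 \supsetneq \cdots \supsetneq H_{hm^{d-1}+1} = Q'$ essentially filling $Q$, with successive pairs separated by a gap of $\side Q/m^d$ on each face, and apply Lemma \ref{c-nested} to $\omega^{X_0}_\Omega(Q')$. In each of the $hm^{d-1}$ shell-layers $H_i \setminus H_{i+1}$ I would locate a configuration $(P, P_*)$ as in Lemma \ref{alternative} with $P_* \subset H_i \setminus H_{i+1}$, $\side P_* = m^{-(d+1)}\side Q$, and $\Net^{n-\rho}_\infty(E \cap P_*) \geq \alpha\eta(\side P_*)^{n-\rho}$. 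Hypothesis \eqref{alpha-bourgain} then forces the non-absorption probability $\omega^{X_i}_{\Omega \setminus H_{i+1}}(G'_{i+1})$ appearing in Lemma \ref{c-nested} to be at most $1-\eta$, so chaining produces the factor $(1-\eta)^{hm^{d-1}}$; the initial factor $\omega^{X_0}_{\Omega\setminus H_1}(G'_1)$, together with a dual application of Lemma \ref{c-lower-bound} and a further use of \eqref{alpha-bourgain} on a slight enlargement of $Q$, is bounded by $\eta^{-1}\mu(Q)$ and supplies the prefactor $\eta^{-1}$.

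The main obstacle is the pigeonhole step, namely guaranteeing a heavy cube $P_*$ in \emph{every} one of the $hm^{d-1}$ layers. Subdivide $Q$ into the $m^{(d+1)n}$ sub-cubes at scale $m^{-(d+1)}\side Q$; by failure of alternative (1) and subadditivity of $\Net^{n-\rho}_\infty$, their total content is at least $(\side Q)^{n-\rho}$, while the combined content of \emph{light} sub-cubes (those with $\Net^{n-\rho}_\infty < \alpha\eta(\side P_*)^{n-\rho}$) is at most $\alpha\eta\cdot m^{(d+1)\rho}(\side Q)^{n-\rho}$. The constraint $(2-m^{-n})\alpha\eta \leq 1-m^{-n}$ leaves positive slack $1-\alpha\eta$, and choosing $\rho$ small enough that $\alpha\eta\cdot m^{(d+1)\rho}$ stays below $(1-m^{-n})/(2-m^{-n})$ forces so many sub-cubes to be heavy that each layer must contain at least one. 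Converting this multiplicative tolerance into a linear upper bound on $\rho$ via the elementary inequality $m^x - 1 \leq x\ln m\cdot m^x$ produces exactly $\rho \leq 0.914186(1-\alpha\eta)(1-m^{-n})m^{-(d+1)n}/\ln m$, the numerical constant absorbing the Taylor remainder.
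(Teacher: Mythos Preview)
Your overall architecture matches the paper's: split $\Child(Q)$ into inner and outer collections via Cauchy--Schwarz, then control $\mu(Q')/\mu(Q)$ by chaining Lemmas~\ref{c-nested} and~\ref{c-lower-bound} through $hm^{d-1}$ nested surfaces and invoking Bourgain's estimate in each shell. The gap is the pigeonhole step. A bound on the \emph{total content} of light sub-cubes does not bound their \emph{number} (a sub-cube disjoint from $E$ has zero content and is automatically light), so ``many heavy sub-cubes'' cannot force one into every layer; nothing prevents $E\cap Q$ from sitting entirely inside $Q'$, leaving every shell light. Worse, even one heavy $P_*$ per shell would be insufficient: Lemma~\ref{c-nested} requires $\sup_{X_i\in G'_i}\omega^{X_i}_{\Omega\setminus H_{i+1}}(G'_{i+1})\le 1-\eta$, so you need the $\eta$-lower bound at \emph{every} point of $G_i$, hence for every $(d{+}1)$-descendant meeting $G_i$, not just for one.

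The paper runs the dichotomy the other way. One shows that if even a \emph{single} $P_*\in\Child^{d+1}(Q)$ is light, then alternative~(1) of Lemma~\ref{dim-lemma} already holds: cover $\Omega^c\cap Q$ by the $m^n-1$ siblings of $P_*^{\uparrow d}$, then the $m^n-1$ siblings of $P_*^{\uparrow d-1}$ inside $P_*^{\uparrow d}$, and so on down to the siblings of $P_*$, and finally $\Omega^c\cap P_*$ itself. This yields
\[
\frac{\Net^{n-\rho}_{m^{-1}\side Q}(\Omega^c\cap Q)}{(\side Q)^{n-\rho}}\le (m^n-1)\sum_{j=1}^{d+1} m^{j(\rho-n)}+\alpha\eta\,m^{(d+1)(\rho-n)},
\]
and Lemma~\ref{prelemma-1} (with $a=\alpha\eta$) is exactly the statement that the right side is $<1$ for $\rho$ in the range~\eqref{lambda-and-rho}; this telescoping inequality, not a counting bound, is the true source of the constant $0.914186$ and the factor $(1-\alpha\eta)(1-m^{-n})m^{-(d+1)n}/\ln m$. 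By contraposition, failure of alternative~(1) forces \emph{every} $P_*\in\Child^{d+1}(Q)$ to be heavy, whence Bourgain's estimate delivers the $\eta$-lower bound uniformly along each $G_i$ (and along $G_1$ for the $\eta^{-1}$ prefactor---no enlargement of $Q$ is needed).
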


In order to prove the theorem, we start with an auxiliary estimate.

\begin{lemma}\label{prelemma-1} If $n\geq 3$, $m\geq 5$, and $d\geq 1$ are integers and $(2-m^{-n})a\leq 1-m^{-n}$, then \begin{equation}\label{rho-goal} (m^n-1)(m^{\rho-n}+ m^{2(\rho-n)}+\cdots+ m^{(d+1)(\rho-n)})<1-am^{(d+1)(\rho-n)}\end{equation} holds for all values of $\rho$ in the range \begin{equation}\label{rho-stipulation} 0\leq \rho \leq 0.914186(1-a)(1-m^{-n})m^{-(d+1)n}/\ln(m).\end{equation}
\end{lemma}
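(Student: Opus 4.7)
My plan is to reduce \eqref{rho-goal} to a single polynomial inequality in $x := m^\rho \geq 1$, use monotonicity to extract a simpler sufficient condition, and close with an elementary logarithmic estimate. First, I would sum the geometric progression $\sum_{k=1}^{d+1}(m^{\rho-n})^k$ in closed form and clear the denominator $1 - m^{\rho-n}$, which is strictly positive in the range of interest. A direct but careful algebraic simplification — in which several boundary terms collapse — recasts \eqref{rho-goal} as the equivalent inequality
\begin{equation*}
A(x-1) \;<\; x^{d+1}(rx - a),
\end{equation*}
where $A := m^{(d+1)n}$ and $r := (m^n - 1 + a)/m^n = 1 - (1-a)/m^n$ satisfy the pleasant identity $r - a = (1-a)(1-m^{-n})$. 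The hypothesis $(2 - m^{-n})a \leq 1 - m^{-n}$ forces $a \leq (1 - m^{-n})/(2 - m^{-n}) < 1$, so $r - a > 0$, and the reduced inequality therefore holds at $x = 1$ (i.e., $\rho = 0$) with a positive gap $r - a$.

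Next, since $r > a \geq 0$, both factors $x^{d+1}$ and $rx - a$ are non-negative and non-decreasing on $[1, \infty)$, so the product attains its minimum on this interval at $x = 1$:
\begin{equation*}
x^{d+1}(rx - a) \;\geq\; r - a \;=\; (1-a)(1-m^{-n}) \quad \text{for all } x \geq 1.
\end{equation*}
It therefore suffices to prove the stronger bound $A(x - 1) < (1-a)(1-m^{-n})$, which rearranges to $\rho \ln(m) < \ln(1 + F_0)$ where $F_0 := (1-a)(1-m^{-n})\,m^{-(d+1)n}$.

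Finally, I would invoke the elementary bound $\ln(1+y) \geq y(1 - y/2)$ for all $y \geq 0$ (immediate from the alternating Taylor series or from checking that the derivative of the difference is $y^2/(1+y) \geq 0$), which gives $\ln(1+y) \geq 0.914186\, y$ as soon as $y \leq 0.171628$. The constraints $m \geq 5$, $n \geq 3$, $d \geq 1$ force $F_0 \leq m^{-(d+1)n} \leq 5^{-6} < 10^{-4}$, so this bound applies with vast room to spare and yields $\ln(1 + F_0) \geq 0.914186\, F_0$. Combined with the hypothesis $\rho \leq 0.914186\, F_0/\ln(m)$, we obtain $\rho \ln(m) \leq 0.914186\, F_0 < \ln(1 + F_0)$, as required (strictness follows from $F_0 > 0$). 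The main obstacle is the first step: the algebraic cancellation producing the clean product structure $x^{d+1}(rx - a)$ is not transparent from \eqref{rho-goal}, but once it is in hand the rest is routine. The specific value $0.914186$ is not sharp — any constant strictly less than $1$ would suffice given the smallness of $F_0$ — but it is a convenient explicit threshold with ample safety margin.
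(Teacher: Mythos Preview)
Your argument is correct. Both proofs begin with the same algebraic reduction: summing the geometric series and clearing $1-m^{\rho-n}$ converts \eqref{rho-goal} into the equivalent inequality $A(x-1)<x^{d+1}(rx-a)$ with $x=m^\rho$, $A=m^{(d+1)n}$, and $r=1-(1-a)m^{-n}$ (the paper writes this as $c-c^{d+2}X<1$ with $X=A^{-1}(r-a/c)$, which is the same thing). From there the two arguments diverge. The paper uses $c^{d+2}\geq 1$ to weaken to $c-X<1$, which is a genuine quadratic in $c$; it solves the quadratic and bounds the larger root from below via the square-root estimate $\sqrt{1+u}\geq 1+(\sqrt{2}-1)u$, and it is precisely this step that produces the constant $\sqrt{2}-\tfrac12=0.9142\ldots$ appearing in the statement. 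You instead bound the right-hand side from below by its value at $x=1$ (monotonicity of each factor), obtaining directly $x<1+F_0$ with $F_0=(1-a)(1-m^{-n})m^{-(d+1)n}$, and then invoke $\ln(1+F_0)\geq 0.914186\,F_0$, which holds with enormous slack since $F_0\leq 5^{-6}$.

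Your route is shorter and avoids the quadratic and square-root manipulations entirely; as you observe, it in fact yields a constant arbitrarily close to $1$ rather than $0.914186$, so you prove a bit more than the lemma asserts. The paper's route explains where the specific number $0.914186\approx(\sqrt{2}-\tfrac12)\cdot 0.99997$ comes from, but at the cost of extra work. One minor remark: your phrase ``since $r>a\geq 0$'' implicitly assumes $a\geq 0$, which the lemma does not state explicitly; however, only $r>0$ and $r-a>0$ are actually needed for the monotonicity argument, and both follow from $a<1$ (a consequence of the hypothesis $(2-m^{-n})a\leq 1-m^{-n}$), so the reasoning is unaffected.
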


\noindent\emph{Remark.} As a referee noted, the proof below shows that by imposing stricter constraints on $m$ and $d$, the constant $0.914186$ can be made arbitrarily close to $\sqrt{2}-1/2=0.914213...$. We shall not dwell on this point, because it would not change the first significant digit of our estimate on $b_3$ and $b_4$ in Theorem \ref{lowdim}. It may be worth exploring how much the bound can be improved without using the relaxation $c-x<1\Rightarrow c - c^{d+2}x<1$, but this is beyond the scope of the current paper.

\begin{proof} Rewriting \eqref{rho-goal} using the formula for partial geometric series, we want to find $\rho\geq 0$ (bigger is better) such that \begin{align*}
\frac{(m^{\rho}-m^{\rho-n})(1-m^{(d+1)(\rho-n)})}{1-m^{\rho-n}}< 1-a m^{(d+1)(\rho-n)}.\end{align*} Rearranging, expanding the products, and cancelling like terms, our goal becomes $$m^\rho(1-m^{(d+1)(\rho-n)})+m^{(d+2)(\rho-n)}<1-am^{(d+1)(\rho-n)}+am^{(d+2)(\rho-n)}.$$ Set $\rho=\log_m(c)$ with $c\geq 1$, close to $1$, to be found below. Then we would like $$c-c^{d+2}\underbrace{m^{-(d+1)n}(1-(1-a)m^{-n}-a/c)}_x<1.$$ Since $c\geq 1$ and $x>0$, the inequality $c-c^{d+2}x<1$ is implied by $c-x<1$. Hence it suffices to find $c\geq 1$ such that $c-m^{-(d+1)n}(1-(1-a)m^{-n}-a/c)< 1$. Equivalently, $$c^2-(1+\underbrace{m^{-(d+1)n}(1-(1-a)m^{-n})}_{y})\,c+\underbrace{am^{-(d+1)n}}_z< 0.$$ Now, $c^2-(1+y)c+z< 0$ holds at $c=1$ provided that $z<y$. In our case, we need $(1-a)m^{-n}< 1-a$, which is true since $a<1$. It follows that we may select $c$ to be any number between 1 and the greater of the two roots of $c^2-(1+y)c+z=0$. That is, $$1\leq c < \frac{1+y+\sqrt{(1+y)^2-4z}}{2}.$$ Well, $1+(\sqrt{2}-1)(2y-4z)\leq \sqrt{1+2y-4z}<\sqrt{(1+y)^2-4z}$ provided that $0\leq 2y-4z<1$. (To verify the first inequality, start by squaring both sides.) In particular, $4z\leq 2y$ as long as $4a \leq 2-2(1-a)m^{-n}$; this holds by our demand that $(2-m^{-n})a\leq 1-m^{-n}$. Hence we can choose \begin{equation*}\begin{split} 1\leq c&\leq  \frac{1+y+1+(\sqrt{2}-1)(2y-4z)}{2}\\
&=1+\underbrace{(\sqrt{2}-1/2)m^{-(d+1)n}\left(\frac{(1-2(\sqrt{2}-1))a}{(\sqrt{2}-1/2)-(1-a)m^{-n}}\right)}_w.\end{split}\end{equation*} Thus, \eqref{rho-goal} holds if $0\leq \rho \leq \ln(1+w)/\ln(m)$. Estimating $$\ln(1+w)\geq w(1-(1/2)w) \geq w(1-(1/2)(\sqrt{2}-1/2)\cdot 5^{-6})\geq 0.99997w$$ and $1-2(\sqrt{2}-1)a/(\sqrt{2}-1/2)-(1-a)m^{-n} \geq (1-a)(1-m^{-n})$ and checking that $(\sqrt{2}-1/2)\cdot0.99997=0.914186...$, we conclude that \eqref{rho-stipulation} implies \eqref{rho-goal}.
\end{proof}

\begin{proof}[Proof of Theorem \ref{t:parameters}] Let $n$, $m$, $\epsilon$, $\alpha$, $\eta$, $h$, $d$, $\gamma$, $\lambda$, and $\rho$ be given according to the statement of the theorem. Shrinking $\rho$ as needed, we may assume without loss of generality that $\rho<\epsilon$. Let $\Delta=\Delta^m(\RR^n)$, let $\Omega\subsetneq\RR^n$ be a domain, let $\Omega^c=\RR^n\setminus\Omega$, let $X\in \Omega$, and let $\omega=\omega^{X}_{\Omega}$. Define $\vec\Delta$ as in \eqref{e:shift}. We say $(Q,Q_*)$ is an \emph{admissible pair} if $Q\in\vec\Delta$, $Q_*\in\Delta$, $\side Q_*=m^{-1}\side Q$, and $\overline{Q_*}$ includes the center of $Q$. For every admissible pair $(Q,Q_*)$, the Bourgain type estimate \eqref{alpha-bourgain} with $s=n-\rho$ implies \begin{equation}
\label{alt1} \omega^{Z}_{(\interior{Q})\setminus \Omega^c}(\Omega^c \cap \interior{Q}) \geq \eta\quad\text{for all }Z\in \overline{Q_*}\setminus \Omega^c,\end{equation} or \begin{equation}\label{alt2}\Net^{n-\rho}_\infty(\Omega^c\cap Q_*)< \alpha\eta (\side Q_*)^{n-\rho}.\end{equation}

To bound $\overline{\dim}_H\,\omega$ from above by $n-\lambda\rho/(\lambda+\rho)$, we aim to use Lemma \ref{dim-lemma}. Because scaling and translating the domain in space and changing the pole has no effect on the Hausdorff dimension of harmonic measure, we may assume without loss of generality that if $P\in\Delta$, $\side P\leq 1$, and $X\in \overline{P}$, then $\overline{P}$ is disjoint from $\partial\Omega$. For any such cube $P$, \begin{equation}\label{outcome0} \sum_{Q\in\Child(P)} \omega(Q)^{1/2}(\vol Q)^{1/2}=0= m^{-\lambda} \omega(P)^{1/2}(\vol P)^{1/2},\end{equation} trivially.

To continue, suppose that $P\in\Delta$ is an $m$-adic cube with $\side P\leq 1$, for which $X\not\in \overline{P}$. For any $j\geq 1$, let $\Child^j(P)$ denote the set of all $j$-th generation descendents of $P$ in the tree $\Delta$. For example, $\Child^2(P)=\{R\in\Delta:R\subset P,\, \side R = m^{-2}\side P\}$ is the set of all grandchildren of $P$. Keeping in mind our goal of checking the hypothesis of Lemma \ref{dim-lemma}, we consider two alternatives. Under Alternative 1, we will show that $P$ satisfies \eqref{dim-lemma-1}. Under Alternative 2, we will show that $P$ satisfies \eqref{dim-lemma-2}.

\smallskip

\emph{Alternative 1. Suppose that the estimate \eqref{alt2} holds for some admissible pair $(Q,Q_*)$ with $Q_*\in\Child^{d+1}(P)$.} Let $Q_*^{\uparrow j}\in\Delta$ denote the $j$-th ancestor of $Q_*$ in $\Delta$. Covering $\Omega^c\cap P$ by $\Child(P)\setminus\{Q_*^{\uparrow d}\}$, $\Child(Q_*^{\uparrow d})\setminus\{Q_*^{\uparrow d-1}\}$, \dots, $\Child(Q_*^{\uparrow 1})\setminus \{Q_*\}$, and $\Omega^c\cap Q_*$, we obtain \begin{equation*}\begin{split}\Net^{n-\rho}_{m^{-1}\side P}(\Omega^c\cap P)< (m^{n}-1)\left((\side Q_*^{\uparrow d})^{n-\rho}+\cdots+(\side Q_*)^{n-\rho}\right)+\alpha\eta(\side Q_*)^{n-\rho}.\end{split}\end{equation*} Rewriting each side length in terms of $\side P$ and rearranging, $$\frac{\Net^{n-\rho}_{m^{-1}\side P}(\Omega^c\cap P)}{(\side P)^{n-\rho}} \leq (m^{n}-1)(m^{-(n-\rho)}+\cdots+m^{-(d+1)(n-\rho)})+\alpha\eta m^{-(d+1)(n-\rho)}.$$ Applying Lemma \ref{prelemma-1} with $a=\alpha\eta$, we conclude that \begin{equation}\label{outcome1}\Net^{n-\rho}_{m^{-1}\side P}(\Omega^c\cap P)<(\side P)^{n-\rho}.\end{equation}

\begin{figure}\begin{center}\includegraphics[width=.8\textwidth]{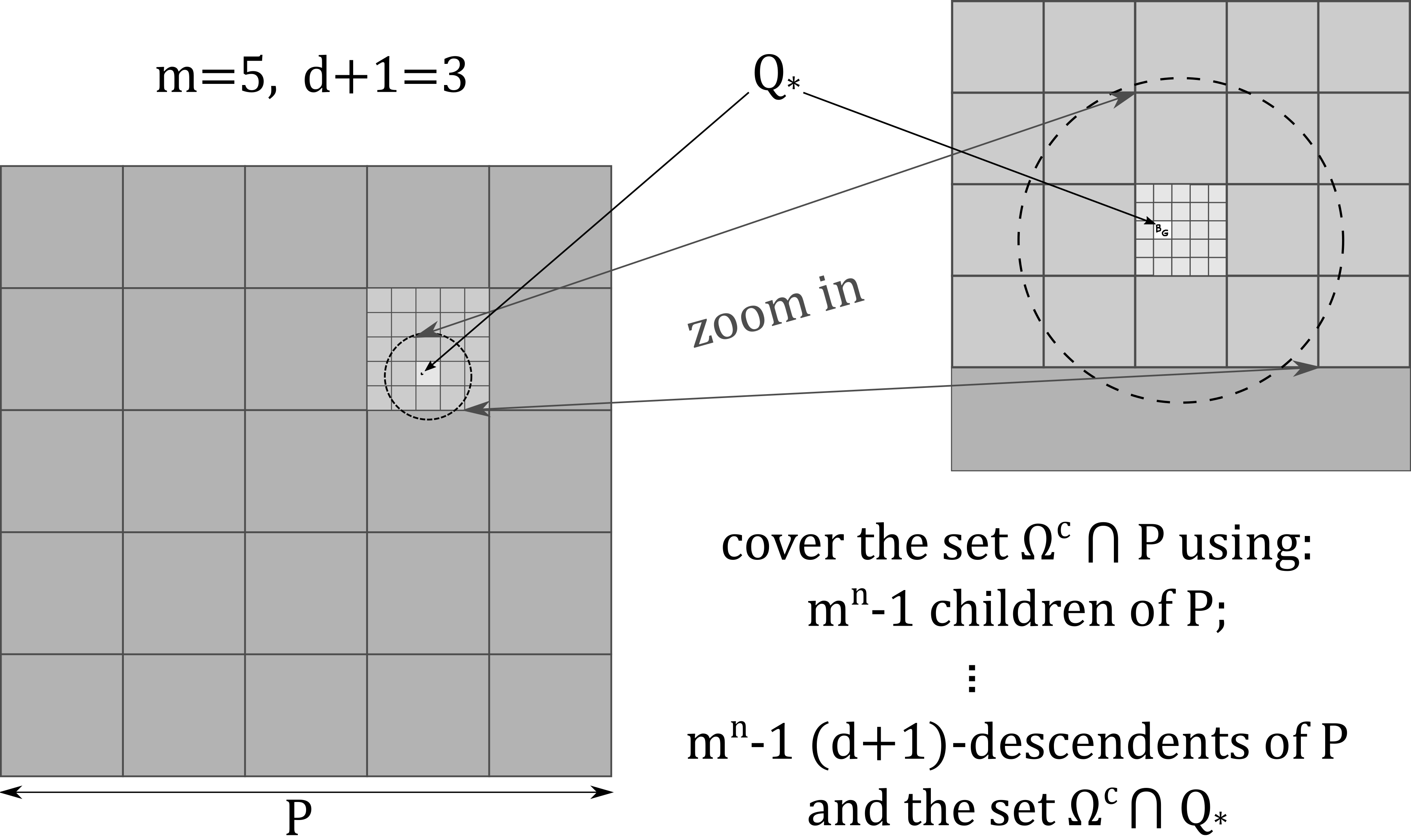}\end{center}\caption{Optimal covering of $\Omega^c\cap P$ under the assumption that the net content of $\Omega^c$ inside of some $(d+1)$-descendent $Q_*$ is small.}  \label{fig:alt1}
\end{figure}

\smallskip

\emph{Alternative 2. Suppose that the estimate \eqref{alt1} holds for every admissible pair $(Q,Q_*)$ with $Q_*\in\Child^{d+1}(P)$.}
Partition $P$ into annular rings of $d$-th generation descendents. Working from the outside to the inside, define $A_0=\emptyset$, $P_0= P$,
\begin{align*}
A_1 &= \bigcup\{Q\in\Child^d(P):Q\not\subset A_0,\, \overline{Q}\cap \partial P_0\neq\emptyset\}, & P_1&=P_0\setminus A_1,\\
A_2 &= \bigcup\{Q\in\Child^d(P):Q\not\subset A_1,\, \overline{Q}\cap \partial P_1\neq\emptyset\}, & P_2&=P_1\setminus A_2,\\
&\ \,\vdots &&\ \,\vdots\\
A_{M} &= \bigcup\{Q\in\Child^d(P):Q\not\subset A_{M-1},\, \overline{Q}\cap \partial P_{M-1}\neq\emptyset\}, & P_{M}&=P_{M-1}\setminus A_M,\\
A_{M+1}&= \bigcup\{Q\in\Child^d(P):Q\not\subset A_{M},\, \overline{Q}\cap \partial P_M \neq \emptyset\}=P_M, && \!\!\!\!\!\!\!\!\!\!P_{M+1}=\emptyset,
\end{align*} where $m^d=2M+1$, if $m$ is odd, and $m^d=2M+2$, if $m$ is even. Next, for each annulus $A_i$, with $1\leq i\leq M$, choose a rectangle $H_i$ such that (i)  $G_i:=\partial H_i \subset A_i$ separates $\partial P_{i-1}$ from $\partial P_i$ and (ii) for any $Z\in G_i$, there exists an admissible pair $(Q,Q_*)$ with $Q_*\in\Child^{d+1}(P)$, $Z\in Q_*$, $Q\subset A_i$, and $Q\cap A_{i+1}=\emptyset$. There are a continuum of possibilities for each $H_i$. Further, as in Lemma \ref{c-nested}, assign $G_i':=\Omega\cap \partial(\Omega\setminus H_i)\subset G_i$ for each $i$. See Figure \ref{fig:paw}.

Fix any $1\leq k\leq M$. Later we will choose $k=k(m,h,d)$. Let $H_1,\dots, H_k$ and $G_1,\dots, G_k$ and $G'_1,\dots,G'_k$ be given as above. In addition, by a slight abuse of notation, write $H_{k+1}=\overline{P_k}$ and $G_{k+1}=\partial P_k$. Then $H_{i+1}\subset \interior{H_i}$ for all $1\leq i\leq k$. Recall that $X\not\in\overline{P}$. On the one hand, by Lemma \ref{c-nested}, the trivial observation $\omega^{X_i}_{\Omega\setminus H_{i+1}}(G'_{i+1})\leq \omega^{X_i}_{\Omega\setminus H_{i+1}}(G_{i+1})$, and the fact that we are in Alternative 2, \begin{equation}\begin{split} \label{big-nest} \omega(P_k)&\leq \omega^{X}_{\Omega\setminus H_1}(G'_1)\prod_{i=1}^k\sup_{X_{i}\in G'_{i}\cap\Omega}\omega^{X_{i}}_{\Omega\setminus H_{i+1}}(G_{i+1})
\\ &\leq \omega^{X}_{\Omega\setminus H_1}(G'_1)\prod_{i=1}^k\left(1-\inf_{X_i\in G'_i\cap\Omega} \omega^{X_i}_{\Omega\setminus H_{i+1}}(A_i)\right) \leq \omega^{X}_{\Omega\setminus H_1}(G'_1)(1-\eta)^k.\end{split}\end{equation} To verify the final inequality, fix $Z\in G'_i$ and let $(Q,Q_*)$ be the admissible pair given by property (ii) in the definition of $G_i$. Then $\omega^{Z}_{\Omega\setminus H_{i+1}}(A_i)\geq \omega^{Z}_{\interior{Q}\setminus \Omega^c}(\Omega^c\cap\interior{Q}) \geq \eta$ by the maximum principle and \eqref{alt1}. On the other hand, \begin{equation}\label{first-land} \omega(P) \geq \omega_{\Omega\setminus H_1}(G'_1)\inf_{Z\in G'_1\cap\Omega} \omega_{\Omega}^{Z}(P)\geq \eta\, \omega^X_{\Omega\setminus H_1}(G'_1)\end{equation} by Lemma \ref{c-lower-bound}, the maximum principle, and \eqref{alt1}. Combining \eqref{big-nest} and \eqref{first-land}, we obtain \begin{equation}\omega(P_k)\leq \eta^{-1}(1-\eta)^k \omega(P).\end{equation} The consequence of this estimate is that if $k$ (hence $M$, hence $d$) is sufficiently large, then $\omega(P_k)$ is arbitrarily small relative to $\omega(P)$.

\begin{figure}\begin{center}\includegraphics[width=.6\textwidth]{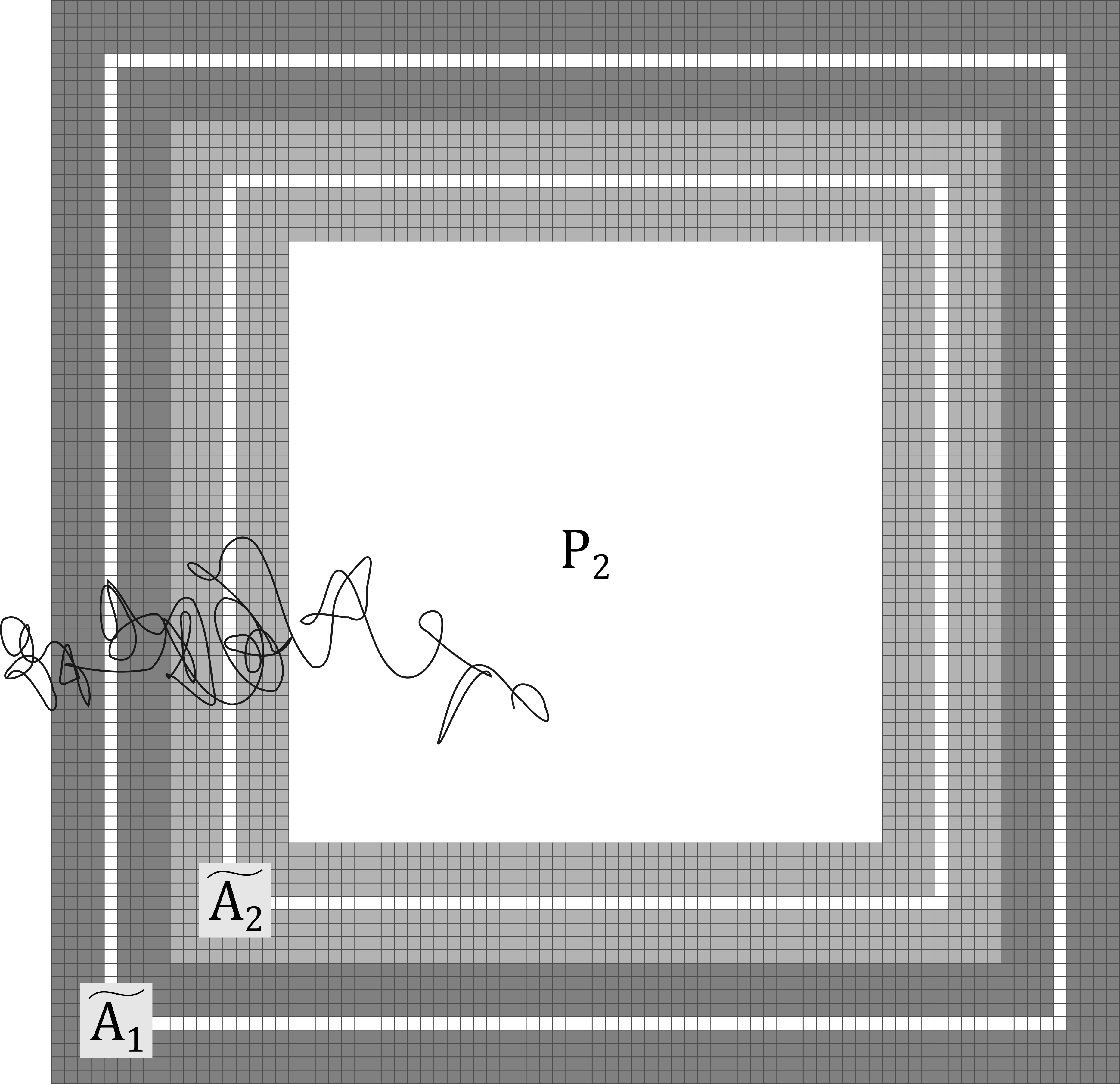}\end{center}\caption{A decomposition $P=\widetilde A_1\cup\dots \cup \widetilde A_h \cup P_{hm^{d-1}}$ when $n=2$, $m=9$, $h=2$, and $d=1$, where each little square represents a $d+1$ descendent of $P$. Brownian motion, started outside of $P$, cannot reach the inner region $P_{hm^{d-1}}$ without passing through surfaces $G_i$ (not displayed) drawn in the collars of white squares. Increasing $h$ raises the number of annuli $\widetilde A_j$ of children of $P$. Increasing $d$ yields a higher density $m^{d-1}$ of separating surfaces per annulus.}  \label{fig:paw}
\end{figure}

To proceed, note that each union $\widetilde A_j=A_{1+(j-1)m^{d-1}}\cup\dots\cup A_{m^{d-1}+(j-1)m^{d-1}}$ of $m^{d-1}$ consecutive rings $A_i$ of $d$-descendents is an annulus formed from children of $P$. That is, $\widetilde A_1$ is the outermost annulus of children, $\widetilde A_2$ is the second annulus of children, and so on. Assign $k=hm^{d-1}$, with $h$ given in the hypothesis of the theorem, and note that $k\leq M$, because $h<m/2$. Partition the set of all children of $P$ into two collections: $$\mathcal{A}=\{Q\in\Child(P):Q\subset A_1\cup\dots\cup A_k\}\quad\text{and}\quad\mathcal{B}=\{Q\in\Child(P):Q\subset P_k\}.$$ Writing $\vol(P_k)/\vol(P)=\delta$, we have  \begin{align*} \sum_{Q\in\Child(P)} \omega(Q)^{1/2}(\vol Q)^{1/2} &= \sum_{Q\in\mathcal{A}} \omega(Q)^{1/2}(\vol Q)^{1/2}
+\sum_{Q\in \mathcal{B}} \omega(Q)^{1/2}(\vol Q)^{1/2}\\
&\leq \omega(A_1\cup\dots\cup A_k)^{1/2}\left(\textstyle\sum_{Q\in \mathcal{A}}\vol Q\right)^{1/2}
+ \omega(P_k)^{1/2}(\vol P_k)^{1/2}\\
&\leq \left((1-\delta)^{1/2}+\delta^{1/2}\eta^{-1/2}(1-\eta)^{k/2}\right)\omega(P)^{1/2}(\vol P)^{1/2},\end{align*} where the first inequality holds by Cauchy-Schwarz. To find the value of $\delta$ in terms of the parameters $n$, $m$, and $h$, write $$\delta=\frac{\vol P_k}{\vol P} = \frac{(m^d-2k)^n}{m^{dn}}=\left(1-\frac{2k}{m^d}\right)^n
=\left(1-\frac{2h}{m}\right)^n.$$ We have shown that \begin{equation}\begin{split} \label{outcome2}
\sum_{Q\in\Child(P)} \omega(Q)^{1/2}(\vol Q)^{1/2} \leq m^{-\lambda}\omega(P)^{1/2}(\vol P)^{1/2},
\end{split}\end{equation} where $\lambda=-\log_m(\gamma)=-\log_m((1-(1-2h/m)^n)^{1/2} + (1-2h/m)^{n/2}\eta^{-1/2}(1-\eta)^{hm^{d-1}/2})>0$ by \eqref{gamma-def}.

\smallskip

\emph{Conclusion.} By \eqref{outcome0}, \eqref{outcome1}, and \eqref{outcome2}, the harmonic measure $\omega=\omega^X_{\Omega}$ of $\Omega$ satisfies the hypothesis of Lemma \ref{dim-lemma}. Therefore, $\overline{\dim}_H\,\omega \leq n-\lambda\rho/(\lambda+\rho)$. As we let $\Omega\subsetneq\RR^n$ be an arbitrary domain, this proves $b_n\geq \lambda\rho/(\lambda+\rho)$.
\end{proof}

\begin{remark}\label{V-Pi} Given $n$, $m$, $\eta$, $h$, and $d$, assign \begin{equation}V:=(1-(1-2h/m)^n)^{1/2}\quad\text{and}\quad \Pi:=\eta^{-1/2}(1-\eta)^{hm^{d-1}/2}.\end{equation} Then $\gamma<V+\Pi$, where $\gamma$ is defined in \eqref{gamma-def}. In particular, $V+\Pi< 1$ implies $\gamma<1$.\end{remark}

\section{Proof of Theorem \ref{main}} \label{sec:main}

Fix $n\geq 3$ (possibly, but not necessarily large). We first verify that $b_n>0$. Fix a large integer $m>\xi_m+2\sqrt{n}$ satisfying the stipulations below. By Corollary \ref{c-all-dim}, a Bourgain type estimate \eqref{alpha-bourgain} holds with $\alpha=C_n m^{n-2}$ and $\epsilon=1$. Set $\eta=1/(3\alpha)$, $h=1$, and $d=n$. Then certainly $\alpha\eta = 1/3 < (1-m^{-n})/(2-m^{-n})$. We claim the quantity $\gamma$ defined in \eqref{gamma-def} is less than $1$ if $m$ is sufficiently large. Indeed, on the one hand, $$V=(1-(1-2h/m)^n)^{1/2}=(1-(1-2/m)^n)^{1/2}<1/2$$ for large enough $m$. On the other hand,  $\Pi=\eta^{-1/2}(1-\eta)^{hm^{d-1}/2}=\eta^{-1/2}(1-\eta)^{m^{n-1}/2}$. Using the bound $\ln(1-x)\leq -x$ for all $0\leq x<1$, we see that $$\ln \Pi \leq \frac{1}{2}\ln(3C_n) + \frac{1}{2}(n-2)\ln(m) - \frac{m^{n-1}}{6C_n m^{n-2}}\rightarrow-\infty\quad\text{as $m\rightarrow\infty$}.$$ Hence $\Pi<1/2$ if $m$ is sufficiently large. Thus, $\gamma<V+\Pi<1$ if $m$ is sufficiently large. Therefore, $b_n>0$ by Theorem \ref{t:parameters}.

Now, suppose that $n\geq n_{1/3}$, where $n_{1/3}$ is given by Corollary \ref{c-large-dim}. Aiming for a quantitative lower bound on $b_n$ that is valid for all sufficiently large $n$, we may increase the value of $n$ as convenient. Set $m=n$. By Corollary \ref{c-large-dim}, a Bourgain type estimate \eqref{alpha-bourgain} holds with $\alpha=(1/3)(n\sqrt{2\pi e})^{n-2}$ and $\epsilon=1/2$. Set $\eta=(n\sqrt{2\pi e})^{-(n-2)}$, which ensures that $\alpha\eta=1/3<(1-m^{-n})/(2-m^{-n})$. Set $h=1$ and $d=2n-3$. On the one hand, $$V=(1-(1-2/n)^n)^{1/2}\sim(1-e^{-2})^{1/2}=0.9298....$$ On the other hand, $\Pi=\eta^{-1/2}(1-\eta)^{n^{2n-4}/2}$ satisfies $$\ln\Pi \leq \frac{1}{2}(n-2)\ln(n\sqrt{2\pi e})-\frac{1}{2}n^{2n-4}(n\sqrt{2\pi e})^{-(n-2)}=\frac{1}{2}(n-2)\ln(n\sqrt{2\pi e})-\frac{1}{2}\left(\frac{n}{\sqrt{2\pi e}}\right)^{n-2}.$$ As the latter expression tends to $-\infty$ as $n$ grows, we see that $\Pi<0.07$ for  large $n$. Thus, $\gamma<V+\Pi < 0.9998$ and $\lambda\geq -\ln(0.9998)/\ln(n)$ for all sufficiently large $n$. By Theorem \ref{t:parameters}, $b_n\geq \lambda\rho/(\lambda+\rho)$, where $\rho =0.914186(1-1/3)(1-n^{-n})n^{-2n(n-1)}/\ln(n)$. Since $\rho$ is substantially smaller than $\lambda$ for large $n$, it follows that $b_n\approx \rho$. In particular, since $0.914186(2/3)(1-n^{-n})>0.6$, we may conclude that $b_n\geq 0.6 n^{-2n(n-1)}/\ln(n)$ for all sufficiently large $n$. Therefore, since $b_n>0$ for all $n\geq 3$, the theorem holds: there exists $c>0$ such that $b_n\geq c\, n^{-2n(n-1)}/\ln(n)$ for all $n\geq 3$.

\begin{remark}For the large $n$ case, one could also choose $d=n-1+\theta(n-2)$ for any $\theta>0$ by making $n$ large enough depending on $\theta$. For simplicity, we chose $\theta=1$.\end{remark}

\section{Proof of Theorem \ref{lowdim}} \label{sec:low}

When $n=3$ and $m=9$, a Bourgain type estimate \eqref{alpha-bourgain} holds with $\alpha=60.8979$ and $\epsilon=0.000001$ by Corollary \ref{c-3}. Assign $\eta=0.0046$, $h=3$, and $d=4$. With the aid of a calculator, one can see that $\gamma=(1-(1-2h/m)^n)^{1/2} + (1-2h/m)^{n/2}\eta^{-1/2}(1-\eta)^{hm^{d-1}/2}<0.9996$. Thus, by Theorem \ref{t:parameters}, $b_3 \geq \lambda\rho/(\lambda+\rho)\geq 1.452...\times 10^{-15}$. See the appendix for details.

When $n=4$ and $m=11$, a Bourgain type estimate \eqref{alpha-bourgain} holds with $\alpha=1660.53$ by Corollary \ref{c-4}. When $\eta=0.00026$, $h=4$, and $d=5$, one may check that $\gamma<0.9995$. Thus, by Theorem \ref{t:parameters}, $b_4\geq \lambda\rho/(\lambda+\rho)\geq 2.199...\times 10^{-26}$. Once again, see the appendix.

\begin{remark} The authors do not claim that these bounds are sharp, but do believe that they are likely close to what the method can prove without further improvements to \eqref{better-bourgain} or a more complicated case analysis in the proof of Theorem \ref{t:parameters}. Another small optimization available is to use H\"older's inequality with conjugate exponents depending on $m,\eta,h,d$ instead of the Cauchy-Schwarz inequality and $p=q=1/2$ in the statement and proof of Lemma \ref{dim-lemma} and in the definition of $\gamma$ and proof of Theorem \ref{t:parameters}.\end{remark}

\section{Coda}\label{sec:coda}

The story is far from over. Now that explicit lower bounds on $b_3$ and $b_4$ and asymptotic lower bounds on $b_n$ are known, one can test new methods and estimates against Bourgain's method. The authors invite further activity to improve their estimates on (or compute!) the dimension of harmonic measure in $\RR^n$, $n\geq 3$.

\appendix

\section{Wolfram Language code for estimating \texorpdfstring{$b_3$}{b\textunderscore 3}}

\captionsetup{justification=centering}

\begin{table}\caption{Bounding Bourgain's constant for harmonic measure: $b_n\geq \lambda\rho/(\lambda +\rho)\approx\rho$ when $\rho\ll\lambda$.} \label{table-of-computations}
\begin{tabular}{ccccccccc}
  \toprule
  $n$ & $m$ & $\eta$ & $h$ & $d$ & $\alpha$ & $\gamma$ & $\lambda$ & $\rho$ \\ \toprule
  3 & 5 & 0.0005 & 2 & 7 & 303.102 & 0.9976... & $1.488...\times 10^{-3}$ & $8.020...\times 10^{-18}$ \\
  3 & 6 & 0.0008 & 2 & 6 & 277.560 & 0.9947... & $2.911...\times 10^{-3}$ & $1.801...\times 10^{-17}$ \\
  3 & 7 & 0.0019 & 3 & 5 & 83.8178 & 0.9998... & $7.481...\times 10^{-5}$ & $2.418...\times 10^{-16}$ \\
  3 & 8 & 0.0011 & 3 & 5 & 81.9976 & 0.9965... & $1.678...\times 10^{-3}$ & $2.215...\times 10^{-17}$ \\
  \color{blue} 3 & \color{blue} 9 & \color{blue} 0.0046 & \color{blue} 3 & \color{blue} 4 & \color{blue} 60.8979 & \color{blue} 0.9996... & \color{blue} $1.616...\times 10^{-4}$ & \color{blue} $1.452...\times 10^{-15}$ \\
  3 & 10 & 0.0031 & 4 & 4 & 61.4480 & 0.9992... & $3.385...\times 10^{-4}$ & $3.210...\times 10^{-16}$ \\
  3 & 11 & 0.0022 & 4 & 4 & 54.2657 & 0.9984... & $6.516...\times 10^{-4}$ & $8.031...\times 10^{-17}$ \\
  3 & 12 & 0.0016 & 5 & 4 & 55.5835 & 0.9993... & $2.254...\times 10^{-4}$ & $2.174...\times 10^{-17}$ \\
  3 & 13 & 0.0012 & 5 & 4 & 52.5339 & 0.9982... & $6.978...\times 10^{-4}$ & $6.521...\times 10^{-18}$ \\
  3 & 14 & 0.0009 & 5 & 4 & 54.1918 & 0.9988... & $4.385...\times 10^{-4}$ & $2.117...\times 10^{-18}$ \\
  \midrule
  4 & 7 & 0.00006 & 3 & 7 & 2409.54 & 0.9998... & $7.291...\times 10^{-5}$ & $3.637...\times 10^{-28}$ \\
  4 & 8 & 0.00016 & 3 & 6 & 2425.26 & 0.9999... & $2.780... \times 10^{-5}$ & $1.390...\times 10^{-26}$ \\
  4 & 9 & 0.00009 & 3 & 6 & 1813.48 & 0.9978... & $9.801...\times 10^{-4}$ & $6.651...\times 10^{-28}$ \\
  4 & 10 & 0.00005 & 4 & 6 & 1834.77 & 0.9994... & $2.361...\times 10^{-4}$ & $3.605...\times 10^{-29}$ \\
  \color{blue} 4 & \color{blue} 11 & \color{blue} 0.00026 & \color{blue} 4 & \color{blue} 5 & \color{blue} 1660.53 & \color{blue} 0.9995... & \color{blue} $2.062...\times 10^{-4}$ & \color{blue} $2.199...\times 10^{-26}$ \\
  4 & 12 & 0.00017 & 5 & 5 & 1685.89 & 0.9999... & $2.779...\times 10^{-5}$ & $3.301...\times 10^{-27}$ \\
  4 & 13 & 0.00012 & 5 & 5 & 1619.82 & 0.9995... & $1.932...\times 10^{-4}$ & $5.289...\times 10^{-28}$ \\
  4 & 14 & 0.00009 & 5 & 5 & 1649.02 & 0.9981... & $6.908...\times 10^{-4}$ & $9.177...\times 10^{-29}$ \\
  4 & 15 & 0.00006 & 6 & 5 & 1626.75 & 0.9997... & $8.531...\times 10^{-5}$ & $1.809...\times 10^{-29}$ \\
  4 & 16 & 0.00005 & 6 & 5 & 1659.76 & 0.9985... & $5.340...\times 10^{-4}$ & $3.816...\times 10^{-30}$ \\
   \bottomrule
\end{tabular}\end{table}

We wrote the following code in Mathematica 13 to estimate $b_3$ using Theorem \ref{t:parameters} and Corollary \ref{c-3}. See Table \ref{table-of-computations} for a record of outputs. For each fixed $m\geq 5$, the parameters $\eta$, $h$, and $d$ were optimized by hand. To maximize $\rho$, the first priority is to minimize $d$. To rule out small values of $d$, take $\eta\approx \alpha^{-1}(1-m^{-3})/(2-m^{-3})$ and check that $\gamma>1$ for each integer $1\leq h<m/2$. Once the optimal value of the integer $d$ is identified, the second priority is to minimize the real-valued parameter $\eta$. Using the current best guess for $\eta$ (keeping $\gamma<1$), adjust $h$ to minimize $\gamma$. One can then test the value of $\gamma$ against smaller values of $\eta$. If $\gamma<1$ for some smaller value of $\eta$, update the best guess for $\eta$ and repeat (adjust $h$, test smaller values of $\eta$). Halt the search for $\eta$ once all smaller values of $\eta$ (up to some predetermined number of decimals) yield $\gamma>1$. Use the values of $\lambda$ and $\rho$ associated to $m$, $\eta$, $h$, and $d$ to bound $b_3$ from below by $\lambda\rho/(\lambda+\rho)$.

\small

\begin{verbatim}
(* All formulas use n=3, epsilon=0.000001 *)
bgAlpha[m_] := (LHS = 1/Sqrt[3] - 2/(m - 2 + Mod[m,2]);
   RHS = m*2/Sqrt[3] + 27*Pi*m^(-0.999999)/(1 - m^(-1.999999));
   Ceiling[10000*RHS/LHS]/10000); (* round up fourth decimal *)

bgMaxEta[m_] := ((1 - m^(-3))/(2 - m^(-3)))/bgAlpha[m];

bgV[m_,h_] := If[h<m/2, (1 - (1 - 2*h/m)^3)^0.5, 1];

bgEtaProd[m_,eta_,h_,d_] := If[h<m/2,
   (1 - 2*h/m)^1.5 * eta^(-0.5) * ((1-eta)^(0.5*h*m^(d-1))), 1];
(* If h >= m/2, then bgV[m,h]=1 and bgEtaProd[m,eta,h,d]=1 *)

bgGamma[m_,eta_,h_,d_] := bgV[m,h]+ bgEtaProd[m,eta,h,d];

bgLambda[m_,eta_,h_,d_] := Max[0,-Log[m,bgGamma[m,eta,h,d]]];
(* If bgGamma[m,eta,h,d]>=1, then bgLambda[m,eta,h,d]=0 *)

bgRho[m_,eta_,d_]:= If[eta <= bgMaxEta[m],
   0.914186*(1 - m^(-3))*(1 - bgAlpha[m]*eta)*m^(-3*(d + 1))/Log[m], 0];
(* If eta > bgMaxEta[m], then bgRho[m,eta,d]=0 *)

bgLowerBound[m_,eta_,h_,d_] := (lambda = bgLambda[m,eta,h,d];
   rho = bgRho[m, eta, d];
   lambda*rho/(lambda + rho));
(* Returns lower bound on b_3 for any admissible (m,eta,h,d) *)
(* For example, bgLowerBound[9,0.0046,3,4] returns 1.45271 * 10^(-15) *)
\end{verbatim}

The code used to estimate $b_4$ (omitted) is similar. It can be reproduced by modifying the definition of \texttt{bgAlpha} using Corollary \ref{c-4} instead of Corollary \ref{c-3} and changing $n=3$ to $n=4$ in the definitions of \texttt{bgMaxEta}, \texttt{bgV}, \texttt{bgEtaProd}, and \texttt{bgRho}.

\normalsize

\bibliography{cdim}
\bibliographystyle{amsbeta}

\end{document}